\documentclass[11pt,a4paper]{article}
\usepackage[utf8]{inputenc}
\usepackage[T1]{fontenc}
\usepackage{lmodern,amsmath,amsthm,amsfonts,amssymb,graphicx,float,microtype,thmtools,mathtools,verbatim}
\usepackage{tocloft}
\setlength{\cftbeforesecskip}{2ex}
\usepackage[english]{babel}
\usepackage[shortlabels]{enumitem}
\setlist[itemize]{topsep=0ex,itemsep=0ex,parsep=0.3ex}
\setlist[enumerate]{topsep=0ex,itemsep=0ex,parsep=0.3ex}
\usepackage[dvipsnames,svgnames,table]{xcolor}
\usepackage[%
unicode=true,
colorlinks=true,
breaklinks=true,
linkcolor={blue!60!black},
citecolor={black},
urlcolor={blue!60!black},
pdftitle={3-Colouring Planar Graphs}]{hyperref}
\usepackage[capitalise, compress, nameinlink, noabbrev]{cleveref}
\crefname{lem}{Lemma}{Lemmas}
\crefname{thm}{Theorem}{Theorems}
\crefname{cor}{Corollary}{Corollaries}
\crefname{prop}{Proposition}{Propositions}
\crefformat{equation}{(#2#1#3)}
\Crefformat{equation}{Equation #2(#1)#3}
\setlist[enumerate,2]{label=(\roman*),ref=(\roman{enumi}.\roman*)}
\newcommand{\authorreveal}[1]{}
\newcommand{\defn}[1]{\textcolor{Maroon}{\emph{#1}}}

\usepackage[longnamesfirst,numbers,sort&compress]{natbib}
\makeatletter
\def\NAT@spacechar{~}
\makeatother
\setlength{\bibsep}{0.4ex plus 0.2ex minus 0.2ex}
\usepackage[margin=30mm]{geometry}
\setlength{\parindent}{0cm}
\setlength{\parskip}{1.15ex}
\renewcommand{\baselinestretch}{1.05}
\setlength{\footnotesep}{\baselinestretch\footnotesep}

\DeclarePairedDelimiter{\ceil}{\lceil}{\rceil}


\renewcommand{\epsilon}{\varepsilon}
\renewcommand{\emptyset}{\varnothing}
\renewcommand{\ge}{\geqslant}
\renewcommand{\le}{\leqslant}
\renewcommand{\geq}{\geqslant}
\renewcommand{\leq}{\leqslant}

\DeclareMathOperator{\tw}{tw}
\DeclareMathOperator{\gm}{gm}

\newcommand{\PP}{\mathcal{P}}

\newcommand{\QQ}{\mathcal{Q}}

\newcommand{\NN}{\mathbb{N}}


\theoremstyle{plain}
\newtheorem{thm}{Theorem}
\newtheorem{lem}[thm]{Lemma}

\crefname{obs}{Observation}{Observations}

\newtheorem*{lem*}{lem}
\theoremstyle{definition}

\newtheorem*{conj*}{Conjecture}

\begin{document}
\title{\bf\boldmath\fontsize{18pt}{18pt}\selectfont
3-Colouring Planar Graphs}

\author{%
Vida Dujmovi{\'c}\,\footnotemark[3]\qquad
Pat Morin\,\footnotemark[5] \qquad
Sergey Norin\,\footnotemark[4] \qquad
David~R.~Wood\,\footnotemark[2]
}

\maketitle

\begin{abstract}
We show that every $n$-vertex planar graph is 3-colourable with monochromatic components of size $O(n^{4/9})$. The best previous bound was $O(n^{1/2})$ due to Linial, Matou{\v{s}}ek, Sheffet and Tardos [\emph{Combin. Probab. Comput.}, 2008].
\end{abstract}

\renewcommand{\thefootnote}{\fnsymbol{footnote}}

\footnotetext[3]{School of Computer Science and Electrical Engineering, University of Ottawa, Ottawa, Canada (\texttt{vida.dujmovic@uottawa.ca}). Research supported by NSERC and the Australian Research Council.}

\footnotetext[4]{Department of Mathematics and Statistics, McGill University, Montr\'eal, Canada (\texttt{snorin@math.mcgill.ca}).  Research supported by NSERC.}

\footnotetext[5]{School of Computer Science, Carleton University, Ottawa, Canada (\texttt{morin@scs.carleton.ca}). Research  supported by NSERC.}

\footnotetext[2]{School of Mathematics, Monash University, Melbourne, Australia (\texttt{david.wood@monash.edu}). Research supported by the Australian Research Council and NSERC.}

\renewcommand{\thefootnote}{\arabic{footnote}}

\bigskip
\bigskip
\begin{center}
\includegraphics[width=120mm]{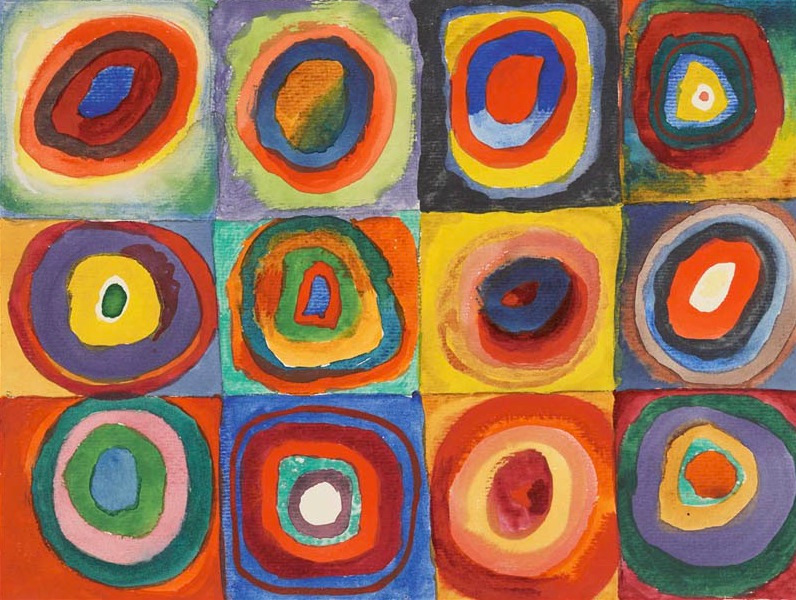}

``Color Study: Squares with Concentric Circles''
by Wassily Kandinsky, 1913

\end{center}

\newpage

\section{Introduction}
\label{Introduction}

The 4-Colour Theorem~\citep{AH89,RSST97} says that every planar graph is properly 4-colourable. What can be said if only three colours are available? Clustered colouring provides an avenue for addressing this question. 

A \defn{colouring} of a graph $G$ is a function that assigns one colour to each vertex of $G$. For an integer $k\geq 1$, a \defn{$k$-colouring} is a colouring using at most $k$ colours. A colouring of a graph is \defn{proper} if each pair of adjacent vertices receives distinct colours. A \defn{monochromatic component} with respect to a colouring of a graph $G$ is a connected component of the subgraph of $G$ induced by all the vertices assigned a single colour. A colouring has \defn{clustering} $c$ if every monochromatic component has at most $c$ vertices. Note that a colouring with clustering 1 is precisely a proper colouring. See \citep{WoodSurvey} for an extensive survey on this topic, and see~\citep{MRW17,Liu24,LMST08,KM07,vdHW18,CL25,LW1,LW2,LW3,LW4,LW24,Kawa08,DEMW23,KM07,EKKOS15,LO18,KO19,DN17,NSSW19,NSW22,BHW,KMRV97,DEMWW22,HW19,HKOSW24} for results on clustered colouring of planar graphs and other minor-closed classes. 

For integers $n,c\geq 1$, let $f_c(n)$ be the minimum integer such that every $n$-vertex planar graph is $c$-colourable with clustering $f_c(n)$. 
By the 4-Colour Theorem, $f_c(n)=1$ for $c\geq 4$. \citet{LMST08} showed that $f_2(n)\in\Theta(n^{2/3})$. The $c=3$ case is wide open. The best known lower bound\footnote{This lower bound is established as follows. Fix an integer $k\geqslant 2$. Let $F_1,\dots,F_k$ be disjoint fans, each with $k^2$ vertices. Let $w_i$ be the centre of $F_i$. Let $G$ be obtained by adding one vertex $v$ adjacent to every vertex in $F_1\cup\dots\cup F_k$. Observe that $G$ is planar with $k^3+1$ vertices. Suppose that $G$ is 3-colourable with clustering less than $k$. Say $v$ is red. Since $v$ is adjacent to every other vertex, some $F_i$ has no red vertex. Say $w_i$ is blue. So the path $F_i-w_i$ has at most $k-1$ blue vertices, and no red vertices. Deleting the blue vertices from $F_i-w_i$ leaves at most $k$ components, all of which are yellow. At least one such component has at least $\ceil{(k^2-(k-1))/k}=k$ vertices, which is a contradiction. So every 3-colouring has clustering at least $k\approx n^{1/3}$.} is $f_3(n)\in \Omega(n^{1/3})$, implicitly proven by \citet*{KMRV97}. The best known upper bound is $f_3(n)\in O(n^{1/2})$, due to \citet*{LMST08}. This paper improves the upper bound on $f_3(n)$ as follows. 



\begin{thm}
\label{3ColourPlanar}
Every $n$-vertex planar graph is 3-colourable with clustering $O(n^{4/9})$.
\end{thm}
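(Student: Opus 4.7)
The strategy is to bootstrap from the Linial--Matou{\v{s}}ek--Sheffet--Tardos bound $f_2(n) \in O(n^{2/3})$ by using the third colour as a separator; the exponent $4/9 = (2/3)^2$ arises as the composition of two successive $(\cdot)^{2/3}$ savings. Concretely, the aim is to find a set $X \subseteq V(G)$ such that
\begin{enumerate}[(a)]
\item every component of $G - X$ has at most $O(n^{2/3})$ vertices, and
\item every component of $G[X]$ has at most $O(n^{4/9})$ vertices.
\end{enumerate}
Given such an $X$, colour every vertex of $X$ with colour $3$ and apply the LMST $2$-colouring (colours $1$ and $2$) to each component of $G - X$. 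Each such component has at most $n^{2/3}$ vertices, so its $2$-colouring has clustering $O((n^{2/3})^{2/3}) = O(n^{4/9})$. Because $X$ uses a distinct colour and separates the pieces of $G - X$, monochromatic components of colours $1$ and $2$ are confined to a single piece of $G - X$ and have size $O(n^{4/9})$, while monochromatic components of colour $3$ are precisely the components of $G[X]$ and have size $O(n^{4/9})$ by (b). The theorem follows.

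The main task is the construction of $X$. I would proceed at two scales using the planar separator theorem. The standard Lipton--Tarjan recursion produces, for any target $c$, a set of size $O(n/\sqrt{c})$ whose removal leaves components of size at most $c$. At the first scale, taking $c = n^{2/3}$ yields a set $X_0 \subseteq V(G)$ of size $O(n^{2/3})$ satisfying property (a). The induced subgraph $G[X_0]$ is planar on $O(n^{2/3})$ vertices but need not satisfy (b), so at the second scale I apply the same recursion inside $G[X_0]$: with target $n^{4/9}$ this extracts a further set $Z \subseteq X_0$ of size $O(|X_0|/\sqrt{n^{4/9}}) = O(n^{4/9})$ such that $G[X_0] - Z$ has components of size at most $n^{4/9}$. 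Setting $X := X_0 \setminus Z$ then satisfies (b) by construction.

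The main obstacle is that $X := X_0 \setminus Z$ may fail (a): moving $Z$ from $X$ back into $V(G) \setminus X$ can amalgamate many components of $G - X_0$ through the vertices of $Z$, producing a single component of $G - X$ much larger than $n^{2/3}$, since a single vertex of $Z$ may be adjacent to arbitrarily many components of $G - X_0$ in a planar graph. The plan to overcome this is to intertwine the separator recursion with a BFS layering: choose each separator at each recursive step to be of Lipton--Tarjan type (a pair of BFS layers plus a cycle in the BFS tree), so that both $X_0$ and $Z$ are unions of such structured objects. The BFS layering gives strong control on how vertices of $Z$ attach to $V(G) \setminus X_0$ (typically only through the boundary layer of each slab), so that the increase in component size caused by merging is at most an additive $O(n^{4/9})$, still within the $O(n^{2/3})$ budget of (a). Balancing the two scales---matching the first-scale target $n^{2/3}$ and the second-scale target $n^{4/9}$ while absorbing the merging induced by $Z$---is where the exponent $4/9$ is genuinely earned, and I expect this bookkeeping to be the most delicate step of the proof.
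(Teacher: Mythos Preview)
Your plan has a real gap at exactly the point you flag. The obstacle that removing $Z$ from $X_0$ may merge many components of $G-X_0$ is not a bookkeeping issue that BFS layering dissolves; it is structural. The set $Z$ is a separator chosen inside $G[X_0]$, so whatever BFS or cycle structure it carries is with respect to $G[X_0]$, not $G$, and says nothing about how the vertices of $Z$ attach to $V(G)\setminus X_0$. A single vertex $z\in Z$ can be adjacent in $G$ to many distinct components of $G-X_0$ (planarity does not bound the degree of $z$), so moving $z$ out of the separator can create a component of $G-X$ of size $\Theta(n)$, not $n^{2/3}+O(n^{4/9})$. Nor can you sidestep this by leaving $Z$ in colour $3$ (then (b) fails) or by dumping all of $Z$ into colour $1$ (then a colour-$1$ component may pick up one $O(n^{4/9})$ piece from each of unboundedly many components of $G-X_0$ through $z$). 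In short, a purely size-based two-scale recursion gives you no handle on the interaction between $Z$ and $G-X_0$, and there is no known way to force the Lipton--Tarjan/Cycle-Separator machinery to produce a $Z$ with the global adjacency control you would need.

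The paper avoids this entirely by working at a single scale. It takes a \emph{minimal} $q$-separator $S$ with $q\approx n^{4/9}$ (so $|S|\approx n^{7/9}$), colours $G-S$ with one colour, and then $2$-colours $G[S]$ directly. The new ingredient is that minimality of $S$ forces every vertex of $G[S]$ to sit next to at least $q$ vertices of $G-S$; translating this into a face-weighting of the plane graph $G[S]$, one shows (via a grid-minor argument) that $G[S]$ has, after deleting $O(n^{4/9})$ vertices, treewidth only $O(n^{1/9})$. Then a treewidth-based separator lemma gives a $q$-separator of the remainder of size $O(n^{4/9})$. The point is that the second colour class is small in \emph{total} ($O(n^{4/9})$ vertices), so its interaction with the other colours is harmless by size alone, which is precisely the control your approach lacks.
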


The proof of \cref{3ColourPlanar} depends on new results regarding treewidth and separators in planar graphs that are of independent interest. This material is introduced in \cref{TreewidthSeparators}. The proof of \cref{3ColourPlanar} is presented in \cref{MainProof}. All our proofs give explicit (though non-optimised) constants.


\section{Treewidth and Separators}
\label{TreewidthSeparators}

We consider finite simple graphs $G$ with vertex-set $V(G)$ and edge-set $E(G)$. 
A \defn{tree-decomposition} of a graph $G$ is a collection $(B_x :x\in V(T))$ of subsets of $V(G)$ (called \defn{bags}) indexed by the vertices of a tree $T$, such that (a) for every edge $uv\in E(G)$, some bag $B_x$ contains both $u$ and $v$, and (b) for every vertex $v\in V(G)$, the set $\{x\in V(T):v\in B_x\}$ induces a non-empty (connected) subtree of $T$. 
The \defn{width} of $(B_x:x\in V(T))$ is $\max\{|B_x| \colon x\in V(T)\}-1$. 
The \defn{treewidth} of a graph $G$, denoted by \defn{$\tw(G)$}, is the minimum width of a tree-decomposition of $G$. Treewidth is the standard measure of how similar a graph is to a tree; see \citep{HW17,Bodlaender98,Reed97} for surveys.

A \defn{balanced separator} in a graph $G$ is a set $S\subseteq V(G)$ such that every component of $G-S$ has at most $\frac12|V(G)|$ vertices. Treewidth and balanced separators are closely related. In particular, \citet{RS-II} showed that every graph $G$ has a balanced separator of size at most $\tw(G)+1$. Conversely, a result of \citet{DN19} implies that if every subgraph of a graph $G$ has a balanced separator of size at most $k$, then $\tw(G)\leq 15k$ (also see \citep{HMM25}). 

The next definition generalises the notion of a balanced separator. For $q\in\mathbb{R}^+$, a set $S$ of vertices in a graph $G$ is a \defn{$q$-separator} if every component of $G-S$ has at most $q$ vertices. \citet{DvoWoo} proved the following qualitative generalisation of the result of \citet{RS-II} mentioned above. 


\begin{lem}[{\protect\citep[Lemma~25]{DvoWoo}}]
\label{TreewidthSep}
Let $k,n\in\NN$ and $p,q\in\mathbb{R}_{>0}$ with $n(k+1) \leq pq$ and $p\geq k+1$. Then any $n$-vertex graph $G$ with $\tw(G)\leq k$ has a $q$-separator of size at most $p$.
\end{lem}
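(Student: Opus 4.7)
The plan is to work with a fixed width-$k$ tree decomposition $(B_x:x\in V(T))$ of $G$, whose bags thus all have size at most $k+1$, and to greedily build a small set $X\subseteq V(T)$ such that removing $S:=\bigcup_{x\in X}B_x$ from $G$ leaves only components of order at most $q$. The accounting the hypotheses force is $|S|\leq (k+1)|X|\leq p$, i.e.\ $|X|\leq p/(k+1)$; since $n(k+1)\leq pq$ gives $n/q\leq p/(k+1)$, it suffices to ensure $|X|<n/q$.

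To link the tree structure to the sizes of components of $G-S$, I root $T$ at an arbitrary vertex $r$, and for each $v\in V(G)$ let $x_v\in V(T)$ be the root of the subtree $\{x\in V(T):v\in B_x\}$, and set $w(x):=|\{v:x_v=x\}|$, so $\sum_{x\in V(T)}w(x)=n$. The structural observation I will use is that, for any $X\subseteq V(T)$, every component $C$ of $G-S$ is ``supported'' inside a single component $T'$ of $T-X$: if $v\notin S$ then all bags containing $v$ lie in $T-X$ and, being a connected subtree, inside one component $T'$, so $x_v\in T'$; and two adjacent vertices of $G-S$ share some bag in $T-X$, forcing them into the same $T'$. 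Consequently $|C|\leq\sum_{y\in T'}w(y)$, and the problem reduces to choosing $X$ so that every component of $T-X$ has total $w$-weight at most $q$.

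To construct such an $X$, I run an iterated centroid-style procedure. Maintain an ``active'' rooted subtree $T_{\mathrm{act}}$, initially $T$. While the total $w$-weight of $T_{\mathrm{act}}$ exceeds $q$, walk down from its root, descending at each step to any child whose subtree within $T_{\mathrm{act}}$ has $w$-weight $>q$. This walk ends at some $v$ whose subtree in $T_{\mathrm{act}}$ has weight $>q$ while every child's subtree has weight $\leq q$. Add $v$ to $X$; the subtrees rooted at children of $v$ become permanently ``settled'' components of $T-X$, each of weight $\leq q$, and $T_{\mathrm{act}}$ is redefined to be the remaining piece containing $r$, whose total weight has dropped by strictly more than $q$.

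To finish, note that the settled subtrees are never touched by later iterations (those operate strictly inside $T_{\mathrm{act}}$), so at termination every component of $T-X$ is either one of the settled subtrees or the final $T_{\mathrm{act}}$, and all have $w$-weight at most $q$; the structural observation then gives that every component of $G-S$ has at most $q$ vertices. For the budget, each iteration removes weight greater than $q$ from $T_{\mathrm{act}}$, and the total weight is $n$, so $|X|\cdot q<n$, hence $|S|\leq(k+1)|X|<(k+1)\cdot n/q\leq p$, using $n(k+1)\leq pq$; the hypothesis $p\geq k+1$ is then only needed to make the conclusion non-vacuous in the boundary case. I do not expect a serious obstacle: the main care is simply in checking that the ``settled'' subtrees indeed remain components of $T-X$ after all later removals, which is immediate since subsequent iterations are confined to $T_{\mathrm{act}}$.
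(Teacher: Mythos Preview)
The paper does not prove this lemma; it is quoted from \citep[Lemma~25]{DvoWoo} without argument, so there is no in-paper proof to compare against. Your proof is correct: the greedy procedure on a rooted width-$k$ tree-decomposition (repeatedly cut at a deepest node whose subtree has $w$-weight exceeding $q$) is the natural approach, and your accounting is sound---each iteration discards weight strictly more than $q$, so $|X|<n/q$ and hence $|S|\le(k+1)|X|<(k+1)\,n/q\le p$. The structural claim that every component of $G-S$ is confined to one component of $T-X$ (and hence has at most its total $w$-weight many vertices) is correct, and the settled subtrees are indeed untouched by later iterations since those occur inside the shrinking $T_{\mathrm{act}}$. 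Your side observation that the hypothesis $p\ge k+1$ is not actually used is also right: when $p<k+1$, the inequality $n(k+1)\le pq$ forces $q>n$, so $S=\emptyset$ already works.
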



Clustered colourings and $q$-separators are inherently related, since a $k$-colouring of a graph has clustering $c$ if and only if the union of any $k-1$ colour classes is a $c$-separator.

\subsection{Planar Graphs}

\citet{LT79} showed that every $n$-vertex planar graph has a balanced separator of size $O(\sqrt{n})$, and thus has treewidth $O(\sqrt{n})$. (Tree-decompositions of $n$-vertex planar graphs with width $O(\sqrt{n})$ can be directly constructed without using the above-mentioned result of \citet{DN19}; see \citep{DMW17} for example.)\ 

A graph $H$ is a \defn{minor} of a graph $G$ if $H$ is isomorphic to a graph that can be obtained from a subgraph of $G$ by contracting edges. A graph~$G$ is \defn{$H$-minor-free} if~$H$ is not a minor of~$G$. A \defn{model} of a graph $H$ in a graph $G$ is a set $\{G_x:x\in V(H)\}$ of pairwise vertex-disjoint connected subgraphs of $G$, such that for each $xy\in E(H)$ there is an edge of $G$ between $G_x$ and $G_y$. Each subgraph $G_x$ is called a \defn{branch set} of the model. Observe that $H$ is a minor of $G$ if and only if there is a model of $H$ in $G$. 

The \defn{$k\times k$ grid} is the graph with vertex-set $\{1,\dots,k\}^2$ where vertices $(x,y)$ and $(x',y')$ are adjacent if and only if $|x-x'|+|y-y'|=1$. For $k\geq 2$, the $k\times k$ grid has treewidth $k$ 
(see \citep{HW17} for example). For a graph $G$, let \defn{$\gm(G)$} be the maximum $k$ such that the $k\times k$ grid is a minor of $G$. Since treewidth is minor-monotone, $\tw(G)\geq \gm(G)$. 
Conversely, \citet{RS-V} showed there is a function $f$ such that $\tw(G)\leq f(\gm(G))$ for every graph $G$.
For planar graphs, \citet{RST94} showed that $f$ can be taken to be linear. 

\begin{lem}[\citep{RST94}]  
\label{PlanarGridMinor}
Every planar graph $G$ satisfies $\tw(G) \leq 6\gm(G)+1$. 
\end{lem}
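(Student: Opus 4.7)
The plan is to exploit the bramble--treewidth duality of Seymour and Thomas, which asserts that $\tw(G)+1$ equals the maximum order of a bramble in $G$, where a bramble is a family of pairwise-touching connected subgraphs whose order is the minimum size of a vertex set meeting every member. It therefore suffices to show that any planar graph carrying a bramble of order at least $6k+3$ contains a $(k+1)\times(k+1)$ grid minor: then $\tw(G)\ge 6k+2$ would force $\gm(G)\ge k+1$, which is the contrapositive of the claimed inequality.

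Fix such a bramble $\BB$ in a planar graph $G$ with a chosen embedding. Following the framework of Robertson, Seymour and Thomas, I would first promote $\BB$ to a tangle $\TT$ of comparable order, and then upgrade $\TT$ to one that is \emph{respectful} of the embedding: for every low-order separation $(A,B)$, the side designated ``big'' by $\TT$ is the side whose vertices concentrate inside a distinguished topological disc in the plane. The existence of such a respectful tangle in a planar graph, obtained by analysing short separators via planar duality, is the key planar-specific ingredient.

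With a respectful tangle in hand, I would construct the grid minor by peeling off concentric layers. Iteratively produce short closed curves in the plane whose intersections with $G$ give nested cycles $C_0 \supset C_1 \supset \cdots \supset C_k$, each enclosing the tangle and each costing a bounded number of vertices from $\TT$'s order budget. Radial paths from $C_0$ inward, crossing every $C_i$ transversally, are then supplied by Menger's theorem applied against $\TT$: any vertex cut of order less than $k+1$ separating $C_0$ from the tangle's innermost region would contradict the tangle axioms, so $k+1$ vertex-disjoint such paths must exist. Contracting along the cycles $C_i$ and along the radial paths then realises the desired $(k+1)\times(k+1)$ grid minor.

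The main obstacle is pinning down the leading constant $6$: each concentric layer must be charged a fixed number of vertices from the order budget (the cycle itself, slack for Menger routing, and dual-graph accounting for crossings), and showing that this charge is exactly $6$ per layer --- rather than the exponential or tower-type blow-up that arises in the general Robertson--Seymour grid-minor theorem --- is what forces the respectful-tangle apparatus and the careful use of planar duality. I expect the tight bookkeeping behind this linear constant to be the most delicate part of the argument.
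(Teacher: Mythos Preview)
The paper does not prove this lemma; it is quoted from \citep{RST94} without argument, so there is no in-paper proof to compare against.

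Your outline does capture the correct skeleton of the Robertson--Seymour--Thomas argument: large treewidth forces many concentric cycles in the embedded graph, Menger's theorem then supplies vertex-disjoint radial paths crossing all of them, and the union contains the desired grid minor. That is indeed the heart of \citep{RST94}.

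Two comments, though. First, the ``respectful tangle'' apparatus you invoke comes from the later Graph Minors papers treating general surfaces; the original planar proof in \citep{RST94} is more elementary, working directly with separations and a metric on the radial graph rather than passing through the full tangle formalism. Your route would likely work, but it is anachronistic and heavier than what is needed here. Second, what you have written is expressly a plan rather than a proof: you correctly identify the constant $6$ as the delicate point and then stop short of the bookkeeping that establishes it. As a description of where the result comes from your sketch is accurate, but it does not itself constitute a proof of the lemma.
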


\cref{PlanarGridMinor} can be considered a qualitative strengthening of the Lipton-Tarjan treewidth bound, since 
$n\geq \gm(G)^2$ for any $n$-vertex graph $G$, so if $G$ is planar, then \cref{PlanarGridMinor} implies $\tw(G)\leq 6\gm(G)+1\leq 6\sqrt{n}+1$.   


\citet{LT80} used their above-mentioned result for balanced separators to show that every $n$-vertex planar graph has a $q$-separator of size $O(\frac{n}{q^{1/2}})$. For the sake of completeness, we include the proof with an explicit constant. 

\begin{lem}
\label{LT} 
For any $q>0$ any $n$-vertex planar graph $G$ has a $q$-separator of size at most $\frac{12n}{q^{1/2}}$.
\end{lem}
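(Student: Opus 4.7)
I would prove the lemma by induction on $n$. For the base case $n\le q$, the empty set is a $q$-separator of size $0\le 12n/\sqrt q$. For $n>q$, combine \cref{PlanarGridMinor} (which gives $\tw(G)\le 6\sqrt n+1$) with the Robertson-Seymour fact mentioned earlier in the excerpt that every graph has a balanced separator of size at most $\tw(G)+1$: this furnishes a set $S_0\subseteq V(G)$ of size at most $6\sqrt n+2$ such that every component of $G-S_0$ has at most $n/2$ vertices. Applying the induction hypothesis to each such component $C$ yields a $q$-separator $S_C\subseteq V(C)$ of size at most $12|V(C)|/\sqrt q$, and $S_0\cup\bigcup_C S_C$ is the desired $q$-separator of $G$.

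The naive bound on its total size is
\[
|S_0|+\sum_C|S_C|\le 6\sqrt n+2+\frac{12(n-|S_0|)}{\sqrt q}=\frac{12n}{\sqrt q}+\Bigl(6\sqrt n+2-\frac{12|S_0|}{\sqrt q}\Bigr),
\]
which exceeds $12n/\sqrt q$ when $|S_0|$ is small. The main obstacle is to absorb this $6\sqrt n+2$ slack. The standard device is a level-by-level analysis of the recursion tree: at depth $d$ the pieces are vertex-disjoint of sizes $\le n/2^d$ and summing to $\le n$, so by concavity of $\sqrt{\,\cdot\,}$ the total separator mass added at depth $d$ is at most roughly $6\cdot 2^{d/2}\sqrt n$; the recursion halts at depth $\lceil\log_2(n/q)\rceil$, and the geometric series $\sum_d 2^{d/2}$ contributes a factor of $O(\sqrt{n/q})$, so the total comes to $O(n/\sqrt q)$.

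To pin down the explicit constant $12$ I would short-circuit the deepest levels by invoking \cref{TreewidthSep} directly once $n\le 4q$: with $k=6\sqrt n+1$ and $p=n(k+1)/q=n(6\sqrt n+2)/q$, both hypotheses $n(k+1)\le pq$ and $p\ge k+1$ are automatic (the latter since $n\ge q$), and $p\le 12n/\sqrt q$ exactly when $\sqrt n\le 2\sqrt q$, i.e.\ when $n\le 4q$. For $n>4q$ the balanced-separator recursion then needs only $O(\log_2(n/q))$ levels; strengthening the induction hypothesis with a corrective $-6\sqrt n$ term (which can be absorbed using the subadditivity $\sum_C\sqrt{|V(C)|}\ge\sqrt{n-|S_0|}$) closes the argument with the stated constant.
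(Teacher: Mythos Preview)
Your overall plan---recursively split with balanced separators and sum the separator mass level by level---is exactly what the paper does (it uses the $\tfrac23$-separator of Alon--Seymour--Thomas rather than the treewidth bound, but that only affects constants). The gap is in how you account for the levels.

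Your depth-$d$ estimate ``$\sum\sqrt{|V(P)|}\lesssim 2^{d/2}\sqrt n$'' does not follow from the stated constraints. Knowing only that the pieces at depth $d$ are disjoint, of size at most $n/2^d$, and sum to at most $n$ gives no upper bound on their number: a balanced separator may shatter a component into arbitrarily many pieces, so there is no reason to have at most $2^d$ of them. With only the lower bound $|V(P)|>q$ (needed to recurse), the number of pieces is at most $n/q$, and Cauchy--Schwarz yields $\sum\sqrt{|V(P)|}\le n/\sqrt q$ at \emph{every} depth, which over $\log_2(n/q)$ depths produces an unwanted logarithmic factor. The fix---and this is what the paper does---is to index levels from the leaves rather than the root: a level-$i$ piece (one whose subtree has height $i$) must contain a level-$(i-1)$ child of at most half its size, hence has more than $2^{i-1}q$ vertices. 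Now the number of level-$i$ pieces is at most $n/(2^{i-1}q)$, Cauchy--Schwarz gives $\sum\sqrt{|V(P)|}\le n/\sqrt{2^{i-1}q}$, and this is a decreasing geometric series summing to $O(n/\sqrt q)$.

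Your second device, strengthening the hypothesis to $12n/\sqrt q-6\sqrt n$, also fails as written. After splitting, you need
\[
(6\sqrt n+2)+B\sqrt n\le B\sum_C\sqrt{|V(C)|},
\]
and the subadditivity bound $\sum_C\sqrt{|V(C)|}\ge\sqrt{n-|S_0|}$ is strictly \emph{less} than $\sqrt n$, so it cannot absorb the positive left side. What can work is the opposite observation: since every component has at most $n/2$ vertices, $\sum_C\sqrt{|V(C)|}\ge\sqrt{n/2}+\sqrt{n/2-|S_0|}\approx\sqrt{2}\,\sqrt n$, giving a genuine gain of $(\sqrt2-1)\sqrt n$. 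This closes the induction, but with $B\ge 6/(\sqrt2-1)\approx 14.5$, so you will not recover the constant $12$ this way.
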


\begin{proof}
\citet[(2.1)]{AST-SJDM94} showed that every $n$-vertex planar graph has a $\frac{2n}{3}$-separator of size at most $c\sqrt{n}$, where $c=\frac{3}{\sqrt{2}}$. Run the following algorithm. Initialise $S:=\emptyset$. While $G-S$ has a component $X$ with more than $q$ vertices, let $S_X$ be a $\frac{2|V(X)|}{3}$-separator of $X$ with size at most $c\sqrt{|V(X)|}$, and add $S_X$ to $S$. 

At the end of the algorithm, $S$ is a $q$-separator. Say a component of $G-S$ has \emph{level} 0. Say $X$ is a component of $G-S$ at some stage of the algorithm, but $X$ is not a component of $G-S$ at the end of the algorithm. Then $X$ is separated by some set $S_X$, which is then added to $S$. Define the \emph{level} of $X$ to 1 plus the maximum level of a component of $X-S_X$. 

By assumption, level 0 components have at most $q$ vertices.  Each level 1 component has more than $q$ vertices.  By induction on $i$, each level $i\geq 1$ component has more than $(\frac{3}{2})^{i-1} q$ vertices. Let $t_i$ be the number of components at level $i\geq 1$. 
Say $X_1,\dots,X_{t_i}$ are the components at level $i$. 
Since level $i$ components are pairwise disjoint, 
$$t_i (\tfrac32)^{i-1} q < \sum_{j=1}^{t_i}|V(X_j)| \leq n. $$ 
The number of vertices added to $S$ by separating level $i$ components is at most 
$$c\sum_{j=1}^{t_i} |V(X_j)|^{1/2}
\leq c \left(t_i \sum_{j=1}^{t_i}|V(X_j)|\right)^{1/2} \!\!\!\!\!
\leq c (t_i n)^{1/2}
< c\left(\frac{n}{ (\frac32)^{i-1} q} \right)^{1/2} \!\!\!\!\! n^{1/2}
= cn  \left(\frac{(\frac32)^{1-i}}{q} \right)^{1/2},$$
where the first inequality uses the Cauchy-Schwarz inequality.
Hence 
\begin{equation*}
|S| 
\leq \sum_{i\geq 1} cn \left(\frac{(\frac32)^{1-i}}{q} \right)^{1/2}
= \frac{cn}{q^{1/2}} \sum_{i\geq 1}  (\tfrac32)^{(1-i)/2}
= \frac{c\,(\frac32)^{1/2} n}{((\frac32)^{1/2}-1) q^{1/2}}
< \frac{12n}{q^{1/2}}.\qedhere
\end{equation*}
\end{proof}


Note that \cref{LT} establishes the previous upper bounds on $f_2(n)$ and $f_3(n)$ by \linebreak \citet{LMST08} mentioned in \cref{Introduction}. In the case of two colours, by \cref{LT}, $G$ has a $O(n^{2/3})$-separator $S$ of size  $O(n/(n^{2/3})^{1/2})=O(n^{2/3})$. Colour $S$ blue and $V(G)\setminus S$ red, to obtain a 2-colouring with clustering $O(n^{2/3})$. In the case of three colours, by \cref{LT}, $G$ has a $O(n^{1/2})$-separator $S$ of size $O(n^{3/4})$.  By \cref{LT} again, $G[S]$ has a $O(n^{1/2})$-separator $S_1$ of size $O( n^{3/4} / (n^{1/2})^{1/2})=O(n^{1/2})$. Colour $V(G)\setminus S$ red, colour $S\setminus S_1$ blue, and colour $S_1$ yellow, to obtain a 3-colouring with clustering $O(n^{1/2})$.




As noted by \citet{LMST08}, these proofs work in any hereditary class admitting $O(\sqrt{n})$ balanced separators, which includes all proper minor-closed classes~\citep{AST90}. To improve the bound for planar graphs, we use a more nuanced version of the above approach based on specific properties of planar graphs. 
Separators again play a key role. In addition to considering the size of a $q$-separator, it will be important to bound the treewidth of the subgraph induced by a $q$-separator. Say $S$ is a $q$-separator in an $n$-vertex planar graph $G$ with $|S|\leq\frac{12n}{q^{1/2}}$. By the Lipton-Tarjan treewidth bound, $G[S]$ has treewidth $O(\sqrt{|S|})$, which is $O(\frac{n^{1/2}}{q^{1/4}})$. At the heart of our proof is the observation that $G$ has a $q$-separator $S$ such that $G[S]$ has treewidth significantly less than this naive bound, as shown in the following lemma.

\begin{lem}
\label{SmallCompsSmallTreewidth} 
For any $q>0$ every $n$-vertex planar graph $G$ has a $q$-separator $S\subseteq V(G)$ such that $|S|\leq \frac{12n}{q^{1/2}}$ and $\gm(G[S])< 2(\frac{n}{q})^{1/2}+2$ and $\tw(G[S])< 12(\frac{n}{q})^{1/2}+13$.
\end{lem}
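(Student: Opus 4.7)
I will construct $S$ by combining a sparse BFS-layer skeleton with slab-wise $q$-separators obtained from \cref{TreewidthSep}, and then bound $\gm(G[S])$ by a thickening argument in the planar embedding of $G$.

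\emph{Step 1 (construction).} Pick any vertex $r\in V(G)$ and run BFS from $r$ to obtain layers $L_0,L_1,\dots,L_h$. Set $\ell:=\lceil q^{1/2}\rceil$. Since $\sum_i|L_i|=n$, an averaging argument over $s\in\{0,\dots,\ell-1\}$ yields $s$ so that $A:=\bigcup_{i\equiv s\pmod\ell}L_i$ satisfies $|A|\le n/\ell\le n/q^{1/2}$. Each maximal ``slab'' $W$ between two consecutive layers of $A$ is the union of at most $\ell-1$ consecutive BFS layers of a planar graph, so by the classical BFS-layering bound (a consequence of Baker's technique) $\tw(G[W])\le 3\ell-4$. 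Apply \cref{TreewidthSep} (with width parameter $3\ell-4$) to $G[W]$ to produce a $q$-separator $S_W\subseteq V(W)$ of size $O(\ell|W|/q)$; take $S_W=\emptyset$ if $|W|\le q$. Let $S:=A\cup\bigcup_W S_W$.

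\emph{Step 2 (size and $q$-separator property).} Every component of $G-S$ lies in a single slab $W$ (because $A$ separates consecutive slabs) and avoids $S_W$, hence has at most $q$ vertices. And $|S|\le |A|+\sum_W|S_W|\le n/q^{1/2}+O(\ell n/q)\le 12n/q^{1/2}$, with enough slack in the constants.

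\emph{Step 3 (grid-minor bound---the main obstacle).} Suppose for contradiction that $G[S]$ contains a $k\times k$ grid model $\{B_{i,j}\}$ with $k\ge 2(n/q)^{1/2}+2$. In the planar embedding of $G$, this grid model is drawn planarly, partitioning the plane into $(k-1)^2$ internal 4-faces and one outer face; each component of $G-S$ lies inside exactly one face of the model. The plan is to \emph{thicken} the grid model into a $k\times k$ grid model in $G$ by injectively assigning each component of $G-S$ to one of the at-most-four branch sets that bound the face containing it. Every component has at most $q$ vertices (the $q$-separator property) and is adjacent to at most four branch sets (planarity around a 4-face), so a charging argument arranges the assignment so that every branch set accumulates at least $q/4$ absorbed vertices. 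That yields $n\ge k^2\cdot q/4$, hence $k\le 2(n/q)^{1/2}$, contradicting $k\ge 2(n/q)^{1/2}+2$.

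\emph{Step 4 (treewidth bound).} Since $G[S]$ is planar (as a subgraph of $G$), \cref{PlanarGridMinor} gives $\tw(G[S])\le 6\gm(G[S])+1<6(2(n/q)^{1/2}+2)+1=12(n/q)^{1/2}+13$.

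The crux will be the charging argument in Step 3: one must ensure that \emph{every} branch set---not merely on average---receives mass at least $q/4$. I expect this to require a careful local analysis around each face of the planar grid model, exploiting both the BFS-plus-slab construction of $S$ and the planarity-imposed limitation on which branch sets each component of $G-S$ can touch.
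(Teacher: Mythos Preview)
Your Step~3 has a genuine gap. The assertion that ``a charging argument arranges the assignment so that every branch set accumulates at least $q/4$ absorbed vertices'' is unsupported and, for a $q$-separator without further properties, simply false: a branch set $X_{i,j}$ of a grid model in $G[S]$ need not be adjacent to \emph{any} component of $G-S$, let alone to components of total size at least $q/4$. A component of $G-S$ lying in an internal face of the model may touch only vertices of $S$ outside the model, and an internal face may even contain no component at all. Your promised ``careful local analysis exploiting the BFS-plus-slab construction'' is never carried out, and there is nothing to carry out: the slab separators $S_W$ supplied by \cref{TreewidthSep} are purely existential, so you have no structural handle on which vertices of $S$ neighbour large pieces of $G-S$. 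You yourself flag this step as the crux, and it is exactly where the argument breaks.

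The paper's proof avoids all of this by taking $S$ to be a $q$-separator of \emph{minimum size} (the bound $|S|\le 12n/q^{1/2}$ is then immediate from \cref{LT}). Minimality is precisely the missing ingredient you need: for every branch set $X_{i,j}$ of a $(2k+1)\times(2k+1)$ grid model in $G[S]$, the set $S\setminus V(X_{i,j})$ cannot be a $q$-separator, so $X_{i,j}$ together with the components of $G-S$ adjacent to it has more than $q$ vertices. Planarity then makes these neighbourhoods pairwise disjoint for the interior even-indexed branch sets $X_{2i,2j}$ with $1\le i,j\le k$, yielding $k^2q<n$ in one line. Your BFS-plus-slab construction is unnecessary, and the step you could not complete becomes a two-line consequence of choosing $S$ minimal.
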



\begin{proof}
  Let $S\subseteq V(G)$ be a $q$-separator in $G$ of minimum size. By \cref{LT}, $|S|\leq \frac{12n}{q^{1/2}}$. Let $k$ be the integer such that $\gm(G[S])\in\{2k+1,2k+2\}$.  Let $(X_{i,j}:i,j\in\{1,\dots,2k+1\})$ be a model of the $(2k+1)\times(2k+1)$ grid in $G$. Let $M:=\bigcup_{i,j} V(X_{i,j})$. So each $X_{i,j}$ is a connected subgraph of $G[S]$, such that contracting each $X_{i,j}$ to a vertex, and deleting $S\setminus V(M)$ gives a $(2k+1)\times(2k+1)$ grid (possibly plus extra edges). For $i,j\in\{1,\dots,2k+1\}$, let $C_{i,j}$ be the union of the components of $G-S$ that are adjacent to $X_{i,j}$. By the minimality of $S$, $|V(C_{i,j}\cup X_{i,j})|>q$ (otherwise $S-V(X_{i,j})$ would be a $q$-separator of $G$, contradicting the minimality of $S$). 
  By planarity, if $2\leq i,j,i',j'\leq 2k$, and 
  $|i-i'|\geq 2$ or $|j-j'|\geq 2$, then $C_{i,j}\cap C_{i',j'}=\emptyset$. 
  So $C_{2i,2j} \cap C_{2i',2j'}=\emptyset$ whenever $1\leq i,j,i',j'\leq k$ and $(i,j)\neq(i',j')$. Thus 
  $k^2 q< n$ and 
  $k < (\frac{n}{q})^{1/2}$. 
Therefore, $\gm(G[S])\leq 2k+2 
< 2(\frac{n}{q})^{1/2}+2$.
By \cref{PlanarGridMinor}, 
$\tw(G[S])< 12(\frac{n}{q})^{1/2}+13$.
\end{proof}


As an aside, we now show that \cref{SmallCompsSmallTreewidth} is best possible. For simplicity, assume that $n^{1/2}$ and $q^{1/2}$ are integers. Let $G$ be the $n^{1/2}\times n^{1/2}$ grid. Let $S$ be the union of every $q^{1/2}$-th row and column in $G$. So $S$ is a $q$-separator for $G$, since each component of $G-S$ is a subgraph of a $q^{1/2} \times q^{1/2}$ grid. Note that $S$ consists of roughly $\frac{2n^{1/2}}{q^{1/2}}$ rows and columns, each of size $n^{1/2}$. So $|S|\approx \frac{2n}{q^{1/2}}$, which matches the bound in \cref{SmallCompsSmallTreewidth} up to a constant factor. The subgraph $G[S]$ is a subdivision of a $\frac{n^{1/2}}{q^{1/2}}\times \frac{n^{1/2}}{q^{1/2}}$ grid, implying $\tw(G[S])=\gm(G[S])\approx (\frac{n}{q})^{1/2}$, again matching the bound in \cref{SmallCompsSmallTreewidth} up to a constant factor.

We actually use a variant of \cref{SmallCompsSmallTreewidth} in our proof of \cref{3ColourPlanar}. We highlight  \cref{SmallCompsSmallTreewidth} here since it is of independent interest. 

\section{Main Proof}
\label{MainProof}

A graph embedded in $\mathbb{R}^2$ with no edge crossings is a \defn{plane graph}. A plane graph is a \defn{triangulation} if each face is bounded by a cycle of three edges. For a plane graph $G$, let $F(G)$ denote the set of faces of $G$.  For a vertex $v$ of $G$, let $F(G,v)$ be the set of faces of $G$ incident to $v$. For $A\subseteq F(G)$, a \defn{face-weighting} is any function $w:A\rightarrow\mathbb{R}_{\geq 0}$. For $B\subseteq A$, let $w(B):=\sum_{f\in B}w(f)$.

\begin{lem}\label{tw_vs_weight}
    For any real numbers $N\ge W\ge 1$, for any plane graph $G$, for any set $X\subseteq F(G)$, and for any face-weighting $w:F(G)\setminus X\to\mathbb{R}_{\geq 0}$ such that $w(F(G)\setminus X) \le N$, and $F(G,v)\cap X\neq\emptyset$ or $w(F(G,v)) \ge W$ for each $v\in V(G)$,
    $$\tw(G)\le 12\sqrt{|X|+N/W}+7.$$
\end{lem}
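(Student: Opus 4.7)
The plan is to bound $\gm(G)$ in terms of $|X|+N/W$ and then invoke \cref{PlanarGridMinor}, which gives $\tw(G)\le 6\gm(G)+1$. Mirroring the $3\times 3$-block argument used in the proof of \cref{SmallCompsSmallTreewidth}, the target is $\gm(G)\le 2\sqrt{|X|+N/W}+O(1)$.

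Let $k$ be the largest integer such that the $(2k+1)\times(2k+1)$ grid is a minor of $G$, and fix a model $(X_{i,j}:1\le i,j\le 2k+1)$ of this grid in $G$. For each $(i,j)$ with $1\le i,j\le k$, the eight branch sets $X_{i',j'}$ in the $3\times 3$ block around $X_{2i,2j}$ (excluding the centre itself), together with the grid edges between them, form a cycle $C_{i,j}$ in $G$. By planarity and the grid structure, $C_{i,j}$ encloses $X_{2i,2j}$ in the plane, and moreover the open regions bounded by the $C_{i,j}$ (on the side containing their respective centres) are pairwise disjoint as $(i,j)$ varies, exactly as in the proof of \cref{SmallCompsSmallTreewidth}.

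For each such $(i,j)$, pick any vertex $v_{i,j}\in X_{2i,2j}$. Since the branch sets are vertex-disjoint, $v_{i,j}$ lies strictly inside the open region bounded by $C_{i,j}$, and because no face of $G$ can cross the edges of $C_{i,j}$, every face in $F(G,v_{i,j})$ is contained in that open region. In particular the sets $F(G,v_{i,j})$ are pairwise disjoint subsets of $F(G)$. By hypothesis, for each $(i,j)$ either $F(G,v_{i,j})\cap X\ne\emptyset$ or $w(F(G,v_{i,j}))\ge W$. Let $A$ be the number of $(i,j)$ of the first type and $B$ the number of the second. Selecting one distinct witness face from $X$ per pair of the first type gives $A\le |X|$; summing weights over the pairwise disjoint face-neighbourhoods gives $BW\le N$. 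Hence $k^2=A+B\le |X|+N/W$, so $k\le \sqrt{|X|+N/W}$, and by the maximality of $k$ we have $\gm(G)\le 2k+2\le 2\sqrt{|X|+N/W}+2$. Plugging into \cref{PlanarGridMinor} yields the desired bound on $\tw(G)$.

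The new conceptual ingredient is transferring the charging argument of \cref{SmallCompsSmallTreewidth} --- where each central branch set was charged to a disjoint high-weight component of $G-S$ --- to the face-weighting setting: each central branch set now spends either a face in $X$ or $W$ units of weight from a disjoint face-neighbourhood. Verifying the pairwise disjointness of these neighbourhoods is the one step that requires care, but it follows from the same Jordan-curve planarity argument used in \cref{SmallCompsSmallTreewidth}. The only remaining subtlety is the additive constant: the computation above gives $\tw(G)\le 12\sqrt{|X|+N/W}+13$, so trimming it to the stated $+7$ presumably requires a slightly sharper block packing or a more careful treatment of the case when $\gm(G)$ is even.
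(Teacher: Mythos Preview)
Your approach is correct and follows the same high-level strategy as the paper: bound $\gm(G)$ using a grid-minor argument, then apply \cref{PlanarGridMinor}. The difference lies in how the charging is set up, and this is exactly what costs you the extra $+6$ in the constant.

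You mimic the $3\times 3$-block trick of \cref{SmallCompsSmallTreewidth}: from a $(2k+1)\times(2k+1)$ grid minor you extract $k^2$ centres whose face-neighbourhoods are pairwise disjoint, so $k^2\le |X|+N/W$ and $\gm(G)\le 2k+2$. The paper instead takes $k=\gm(G)$ directly and passes to a \emph{minimal} subgraph $G'$ of $G$ containing the $k\times k$ grid $H$ as a minor. Minimality forces every branch set to be a tree, which yields a bijection between the faces of $G'$ and the faces of $H$. Each internal face $f$ of $G'$ then sends charge $w(f)/4$ to the four grid vertices on the corresponding face of $H$. An internal vertex $x$ of $H$ incident to no face of $G'$ containing a member of $X$ has $F(G,v)\cap X=\emptyset$ for any $v\in G'_x$, so it receives charge at least $W/4$; the remaining internal vertices number at most $4|X|$. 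This gives $(k-1)^2\le 4|X|+4N/W$, hence $k\le 2\sqrt{|X|+N/W}+1$ and the stated $+7$.

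In short: your block argument is perfectly valid (and closer in spirit to \cref{SmallCompsSmallTreewidth}), while the paper's minimal-model-plus-face-bijection device squeezes out the sharper additive constant by using \emph{all} internal grid vertices rather than every second one in each direction. Since the $+7$ versus $+13$ distinction does not affect any downstream argument in the paper, your proof would serve just as well.
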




\begin{proof}
Extend the face-weighting $w$ so that, for any subgraph $Q$ of $G$ and each face $f$ of $Q$, $w(f):=\sum_{f'} w(f')$, where $f'$ ranges over all faces in $F(G) \setminus X$ contained in $f$. Let $k$ be the maximum integer such that the $k\times k$ grid $H$ is a minor of $G$. By \cref{PlanarGridMinor}, $\tw(G)\leq 6(k+1)-5=6k+1$.     Let $G'$ be a minimal subgraph of $G$ that contains $H$ as a minor. Let $(G'_x:x\in V(H))$ be a model of $H$ in $G'$.

By minimality, for each $x\in V(H)$, the subgraph $G'_x$  is a tree. Thus, there is a 1--1 correspondence between faces of $G'$ and faces of $H$. For every `internal' face $f$ of $G'$, send a `charge' of $\frac{w(f)}{4}$ to each of the four vertices of $H$ incident to the face of $H$ corresponding to $f$. The total charge is at most the total weight, which is at most $N$. 

Say an internal face of $H$ is \defn{special} if the corresponding face of $G'$ contains a face in $X$. At most $4|X|$ vertices of $H$ are incident to a special face. Consider an internal vertex $x$ of $H$ incident to no special face. Let $v$ be any vertex in $G'_x$. So $F(G,v)\cap X=\emptyset$, implying $w(F(G,v)) \geq W$. Since $x$ is internal, each face in $F(G,v)$ is contained within an internal face of $G'$. So $x$ receives a charge of at least $\tfrac{w(F(G,v))}{4} \geq \frac{W}{4}$. Thus, at most $4N/W$ internal vertices of $H$ are incident to no special face. Hence, the number of internal vertices in $H$, namely $(k-1)^2$, is at most $4|X| + 4N/W$. Therefore  $k\leq 2\sqrt{|X|+N/W}+1$ and $\tw(G)\le 12\sqrt{|X|+N/W}+7$.
\end{proof}

The next lemma is implicitly used in (6.3) by \citet{RST94}, who say the proof is a `straightforward modification' of the proof of (4.1) by \citet{RS-III}. We include a proof for completeness.

\begin{lem}
\label{FindGridMinor}
Let $\gamma$ be a cycle bounding the infinite face in a plane graph $G$ with $|V(\gamma)|=4t$. Let $v_1,\ldots,v_{4t}$ be the vertex set of $\gamma$ in order, let $A'=\{v_1,\ldots,v_t\}$ $A=\{v_{t+1},\ldots,v_{2t}\}$, $B'=\{v_{2t+1},\ldots,v_{3t}\}$, and $B=\{v_{3t+1}, \ldots, v_{4t}\}$. If $G$ contains $t$ pairwise vertex disjoint $(A,B)$-paths and $t$ pairwise vertex disjoint $(A',B')$-paths then $G$ contains a $t \times t$ grid minor.
\end{lem}

\begin{proof} Let $\PP=\{P_1,\ldots, P_t\}$ and $\QQ=\{Q_1,\ldots, Q_t\}$ be two sets of  pairwise vertex disjoint $(A,B)$- and $(A',B')$-paths, respectively. As $G$ is plane with boundary $\gamma$, we  assume without loss generality that $P_i$ has ends $v_{t+i}$ and $v_{4t-i+1}$ and $Q_i$ has ends $v_i$ and $v_{3t+1-i}$ for each $i \in [t]$.  Moreover, we may assume that $P_1,P_t,Q_1, Q_t \subseteq \gamma$, e.g. $V(P_1)= \{v_{t+1},v_t,\ldots,v_1,v_{4t}\}$.
		
 Suppose for a contradiction that $G$ does not contain a $t \times t$ grid minor, and choose such a graph $G$ with $|E(G)|$ minimum. In particular, $G$
 is minor-minimal subject to existence of paths  $\PP$  and $\QQ$ as above.  
		 
It follows that every edge of $G$ belongs to $\PP$ or $\QQ$ and every vertex in $V(G)$ belongs to both $\PP$ and $\QQ$, as otherwise we can contract any edge incident to it. If there exists $e \in E(\PP) \cap E(\QQ)$ we can also contract it and so we assume that there are no such edges, subject to the following adjustment of the setting. Currently, $v_{4t}v_1 \in  E(P_1) \cap E(Q_1)$ and contracting it reduces the length of $\gamma$, thus from now on we assume $v_{4t}=v_1$ and, similarly, $v_{at}=v_{at+1}$ for $a=1,2,3$. 
	
For $v \in V(G)$ define $i(v)$ and $j(v)$ to be the unique indices such that $v \in V(P_{i(v)}) \cap V(Q_{j(v)})$. Thus for every edge $e=uv$ we have $i(u)=i(v)$ or $j(u)=j(v)$, but not both. Indeed, if $i(u)=i(v)$ and $j(u)=j(v)$ and, say, $e \in E(\PP)$, without loss of generality, then we can reroute the subpath of $Q_{j(v)}$ between $u$ and $v$ along $e$, contradicting the choice of $G$. Additionally, using the fact that $G$ is plane we conclude that  if $i(u)=i(v)$ (i.e. $e \in E(\PP)$) then $|j(u)-j(v)| = 1$.
	
Consider $v \in V(G) \setminus V(\gamma)$. Let $u$  and $w$ be the neighbours of $v$ along $P_{i(v)}$ such that  $u, v$ and $w$ occur on $P_{i(v)}$ in this order as we traverse it from $B$ to $A$. We say that $v$ is a \emph{crater} if $j(u)=j(w)=j(v)-1$. If no vertex in $V(G) - V(\gamma)$ is a crater, then the function $j(\cdot)$ increases along every path in $\PP$ as we traverse it from $B$ to $A$ from $1$ to $t$. Thus every path in $\PP$ has $t-1$ edges and shares exactly one vertex with every path in $\QQ$. Thus $G$ is isomorphic to a $t \times t$ grid, which is the desired contradiction.
	
Thus we assume that there exists a crater $v \in V(G) \setminus V(\gamma)$ and choose such $v$ with $j(v)$ minimum. Let $i=i(v)$, $j=j(v)$, for brevity, and let $u, w \in V(P_i) \cap V(Q_{j-1})$ be the neighbours of $v$ on $P_i$. Then $u$ and $w$ are not adjacent on $Q_{j-1}$, thus there exists $v' \in Q_{j-1}$ between $u$ and $w$. Let $i'=i(v')$, the edges of $P_{i'}$ incident to $v'$ both have their second ends on $Q_j$ or $Q_{j-2}$, as observed earlier. However, neither can have an end on $Q_j$, as this would create a cross in $G$. Thus both of these edges have the second end on $Q_{j-2}$ and $v'$ is a crater, contradicting the choice of $v$. 
\end{proof}	

We make use of the following lemma: 

\begin{lem}
\label{cycle_separator}
For any plane triangulation $G$ and face-weighting  $w:F(G)\to\mathbb{R}_{\ge 0}$ with total weight $N:=w(F(G))$, there is a cycle $\lambda$ in $G$ of length at most $4\gm(G)+4$ such that $w( F(C)\cap F(G) )\le \frac23 N$ for each component $C$ of $G-V(\lambda)$. 
\end{lem}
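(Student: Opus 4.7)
I propose constructing the cycle $\lambda$ as a fundamental cycle of a BFS spanning tree $T$ of $G$, chosen so that the BFS root $v_0$ realises the graph's radius. Let $k := \gm(G)$. By \cref{PlanarGridMinor}, $G$ contains no $(k{+}1) \times (k{+}1)$ grid minor.

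First, I would select $v_0 \in V(G)$ of minimum eccentricity and construct a BFS spanning tree $T$ rooted at $v_0$ of depth $d$. For every non-tree edge $e = uv$, the fundamental cycle $C_e$ consists of $e$ together with the $T$-path from $u$ to $v$, and has length at most $2d+1$. I would then apply a Lipton--Tarjan-style discrete intermediate-value argument to find a balancing non-tree edge. Specifically, order the non-tree edges in a planar/rotational fashion so that as $e$ varies through consecutive non-tree edges, the weight $w(F^+(e))$ on the interior side of $C_e$ changes by the weight of at most one face per step. Sweeping from one extreme (interior weight near $0$) to the opposite extreme (interior weight near $N$), some edge $e^*$ must satisfy $\tfrac{N}{3} \le w(F^+(e^*)) \le \tfrac{2N}{3}$, which gives the desired face-weight balance for $\lambda := C_{e^*}$.

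The main obstacle, and the structural heart of the proof, is ensuring $d \le 2k+1$, which would force $|V(\lambda)| \le 2d+1 \le 4k+3 \le 4k+4$. This reduces to the claim that every plane triangulation has radius at most $2\gm(G)+1$. I would argue by contradiction: if even a radius-minimising BFS has depth $\ge 2k+2$, then the $\ge 2k+2$ nonempty concentric BFS layers $L_0, L_1, \dots, L_{2k+2}$ provide enough planar structure to extract a $(k{+}1) \times (k{+}1)$ grid minor of $G$, contradicting $\gm(G) = k$. Concretely, alternate layers $L_1, L_3, \dots, L_{2k+1}$ should each contain a cycle separating interior BFS layers from exterior ones (using planarity and the triangulation assumption), giving $k+1$ nested cycles; meanwhile, $k+1$ internally disjoint radial paths running through $T$ connect the innermost cycle to the outermost, and together these form the forbidden grid minor. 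Making this extraction rigorous—confirming that BFS layers in a plane triangulation have the required cyclic structure and that enough internally disjoint radial paths can be found—is the technical crux that I expect to require the most care; the slack between $4k+3$ and the stated $4k+4$ accommodates the inevitable $\pm O(1)$ losses in converting ``depth $\le 2k+1$'' to ``cycle length $\le 4k+4$''.
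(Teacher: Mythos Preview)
Your proposal has a genuine gap: the claim that every plane triangulation has radius at most $2\gm(G)+1$ is false. A concrete counterexample is the ``triangulated cylinder'': take vertices $(i,j)$ with $i\in\{0,1,2\}$ and $j\in\{0,\dots,n\}$, and edges $(i,j)(i{+}1,j)$, $(i,j)(i,j{+}1)$, $(i,j)(i{+}1,j{+}1)$ (indices in $i$ taken modulo~$3$). An edge/face count shows this is a plane triangulation on $3(n{+}1)$ vertices. Every edge changes the second coordinate by at most~$1$, so the radius is at least $\lceil n/2\rceil$. On the other hand the obvious path-decomposition with bags $\{(i,j),(i,j{+}1):i\in\{0,1,2\}\}$ has width~$5$, so $\gm(G)\le 5$. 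Thus the radius is unbounded while $\gm$ stays constant, and no BFS rooted anywhere has depth $\le 2\gm(G)+1$. Your grid-extraction step fails in exactly the way this example suggests: each BFS layer here is a triangle on three vertices, so although you get many nested separating cycles, you can thread at most three vertex-disjoint radial paths through them, yielding only a $3\times m$ minor rather than a $(k{+}1)\times(k{+}1)$ grid. In general, many nested short cycles do not force a large grid minor; you would additionally need the layers to be wide, and BFS gives you no control over that.

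The paper avoids this obstacle by not fixing a root at all. Following Alon--Seymour--Thomas, it takes a variational approach: set $t:=\gm(G)+1$ and, among all cycles of length at most $4t$, choose one that lexicographically minimises the pair $(\max\{w(\lambda^-),w(\lambda^+)\},\,\zeta(\lambda))$, where $\zeta$ counts faces on the heavier side. If the minimiser were still unbalanced with $w(\lambda^-)>\tfrac23 N$, local perturbations force $\lambda$ to have length exactly $4t$ and force every sub-path of $\lambda$ of length $<2t$ to be the unique shortest path between its endpoints inside $\lambda$. Splitting $\lambda$ into four arcs of length $t$ and applying Menger's theorem between opposite arcs then yields $t$ pairwise disjoint paths in each of two transverse directions, whose union contains a $t\times t$ grid minor --- contradicting $\gm(G)=t-1$. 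In the cylinder example this argument simply selects one of the cross-sectional triangles near the middle, a cycle of length~$3$, which is well within the bound.
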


\begin{proof}
 The following is an adaptation of the proof of the Planar  Cycle Separator Theorem given by \citet{AST-SJDM94}. First, observe that if $G$ contains a face $f^\star=xyz$ with $w(f^\star)\ge \tfrac13 N$, then the cycle $\lambda:=x,y,z$ satisfies the requirements of the lemma, since for any component $C$ of $G-\{x,y,z\}$, $$w(F(G)\cap F(C)) \leq w(F(G)\cap F(G-\{x,y,z\})) 
 \leq N-w(f^\star)\le \tfrac23 N.$$  
 Now assume that $w(f)< \tfrac13 N$ for each $f\in F(G)$.

  For any cycle $\lambda$ in $G$, let $w(\lambda^-)$ be the sum of $w(f)$ over all faces of $G$ contained in the interior of $\lambda$.  Let $w(\lambda^+)$ be the sum of $w(f)$ over all faces of $G$ contained in the exterior of $\lambda$.  Since each face  $f\in F(G)$ contributes $w(f)$ to exactly one of these two sums,  $w(\lambda^-)+w(\lambda^+)= w(F(G))= N$, for any cycle $\lambda$ in $G$.    If $w(\lambda^-)>w(\lambda^+)$, then define $\zeta(\lambda)$ to be the number of faces of $G$ contained in the interior of $\lambda$, otherwise define $\zeta(\lambda)$ to be the number of faces of $G$ contained in the exterior of $\lambda$.  Define 
  $\phi(\lambda):=(\max\{w(\lambda^-),w(\lambda^+)\}, \zeta(\lambda))$.
  Let $t:=\gm(G)+1$.  Let $\lambda$ be a cycle in $G$ of length at most $4t$ that lexicographically minimizes $\phi(\lambda)$.  We claim that $\lambda$ satisfies the conditions of the lemma.  

  
  If $\max\{w(\lambda^+),w(\lambda^-)\}\le \frac23 N$, then $\lambda$  satisfies the conditions of the lemma since, for any component $C$ of $G-V(\lambda)$, the faces of $F(G)\cap F(C)$ are either completely contained in the interior of $\lambda$ (so $w(F(G)\cap F(C)) \le w(\lambda^-)$) or completely contained in the exterior of $\lambda$ (so $w(F(G)\cap F(C)) \leq w(\lambda^+)$).  Now assume, without loss of generality, that $w(\lambda^-)>\frac23 N$. This assumption implies that $N-w(\lambda^-)=w(\lambda^+)<\tfrac13 N$ and that $\zeta(\lambda)$ is equal to the number of faces of $G$ contained in the interior of $\lambda$.  
    
If the interior of $\lambda$ contains no vertices, then every component $C$ of $G - V(\lambda)$ is contained in the exterior of $\lambda$, and so $w(F(C) \cap F(G)) < \tfrac{1}{3}N$ for every component of $G$.

Thus we assume that the interior of $\lambda$ contains at least one vertex.
We claim that $\lambda$ has length exactly $4t$.  Suppose, instead, that $\lambda$ has length at most $4t-1$. Let $xyz$ be any face of $G$ such that  $xz$ is an edge of $\lambda$ and $y$ is in the interior of $\lambda$.  (Such a face exists because $\lambda$ has at least one vertex of $G$ in its interior.)\  Let $\gamma$ be the cycle obtained from $\lambda$ by replacing the edge $xz$ with the path $xyz$.  Then $\gamma$ has length at most $4t$, $w(\gamma^-) = w(\lambda^-)-w(xyz)\le w(\lambda^-)$ and $w(\gamma^+)=w(\lambda^+)+w(xyz)<\frac23 N<w(\lambda^-)$. Furthermore, if $w(\gamma^-)=w(\lambda^-)$ (which happens when $w(xyz)=0$), then $\zeta(\gamma^-)=\zeta(\lambda^-)-1$, since the face $xyz$ is in the interior of $\lambda$ but not in the interior of $\gamma$. Therefore, $\phi(\gamma)<\phi(\lambda)$,  contradicting the choice of $\lambda$. Hence,  $\lambda$ has length exactly $4t$. 

Let $G^-$ be the subgraph of $G$ induced by $V(\lambda)$ and the vertices of $G$ in the interior of $\lambda$.  Let $v_0,\ldots,v_k$ be a path in the cycle $\lambda$ with $k<2t$. We claim that $v_0,\ldots,v_k$ is the unique shortest path from $v_0$ to $v_k$ in $G^-$.  Suppose that this were not the case and let $k$ be the minimum value for which $\lambda$ contains a path $v_0,\ldots,v_k$ but $G^-$ contains another path $P$ from $v_0$ to $v_k$ of length at most $k$.  By the minimality of $k$, $P$ has no vertices of $\lambda$ in its interior.  Therefore, the graph $\lambda\cup P$ contains three cycles, $\lambda$, $\lambda_1$ and $\lambda_2$ each having length at most $4t$.  Then $w(\lambda_1^-)+w(\lambda_2^-)=w(\lambda^-)$. Assume, without loss of generality, that $w(\lambda^-_1)\ge \tfrac{1}{2}w(\lambda^-)> \tfrac13 N$.  Then $w(\lambda_1^-)\le w(\lambda^-)$ and $w(\lambda_1^+)=N-w(\lambda_1^-)<\frac23 N<w(\lambda^-)$.  Furthermore, if $w(\lambda_1^-)=w(\lambda^-)$ (which happens when $w(\lambda_2^-)=0$), then $\zeta(\lambda_1)<\zeta(\lambda)$ since the faces of $G^-$ in the interior of $\lambda_2$ are in the interior of $\lambda$ but not in the interior of $\lambda_1$.
  Therefore, $\phi(\lambda_1)<\phi(\lambda)$,  contradicting the choice of $\lambda$. 
 
  Let $v_1,\ldots,v_{4t}$ be the vertices of $\lambda$, in order.  Let $X\subseteq V(G^-)$ be an inclusion-minimal set that separates $A:=\{v_{t+1},\ldots,v_{2t}\}$ and $B:=\{v_{3t+1},\ldots,v_{4t}\}$ in $G^-$.  That is, no component of $G^--X$ contains vertices from both $A$ and $B$. Then $X$ must contain at least one vertex from the path  $L:=v_{4t},v_1,\ldots,v_{t+1}$ and at least one vertex from the path $R:=v_{2t},\ldots,v_{3t+1}$.  Since $G^-$ is a near-triangulation and $X$ is a minimal separator of $A$ and $B$, $G^-[X]$ is connected\footnote{This follows from the fact that if $A$ and $B$ are disjoint connected subgraphs in a planar triangulation $H$, then every minimal separator for $A$ and $B$ is a cycle in $H$ (see \citep[Proposition~8.2.3]{MoharThom}). Apply this fact to the plane triangulation obtained from $G^-$ by adding a new vertex in the outerface adjacent to every vertex in $\lambda$.}. Therefore, $G^-[X]$ contains a path from a vertex in $L$ to a vertex in $R$.  By the preceding paragraph, this path has length at least $t-1$ and thus contains at least $t$ vertices.  Hence $|X|\ge t$.  Therefore, any set $X$ that separates $A$ and $B$ in $G^-$ has size at least $t$. By Menger's Theorem, $G^-$ contains $t$ pairwise vertex-disjoint paths $P_1,\dots,P_t$ each with one endpoint in $A$ and one endpoint in $B$. By the same argument (with indices shifted by $t$), $G^-$ contains $t$ pairwise vertex-disjoint paths $Q_1,\dots,Q_t$ between $A':=\{v_{1},\ldots,v_{t}\}$ and $B':=\{v_{2t+1},\ldots,v_{3t}\}$. By construction, each $P_i$ intersects each $Q_j$. 
  Thus $\gm(G)\geq t$ by \cref{FindGridMinor}.
\end{proof}

A \defn{noose} of a plane graph $G$ is a simple closed curve $\lambda$ in $\mathbb{R}^2$ such that for each edge $vw$ of $G$, $\lambda$ contains the entire edge $vw$ (including the vertices $v$ and $w$) or does not intersect the interior of $vw$. A graph $G$ is \defn{chordal} if every induced cycle has length at least 4. 


\begin{lem}
    \label{embedding_preserving_triangulation}
    For every plane graph $G$ with at least three vertices, edges can be added to $G$ to create an embedding-preserving plane triangulation $G'$ with $\tw(G')\leq\max\{\tw(G),3\}$.
\end{lem}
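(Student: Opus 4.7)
I would build $G'$ incrementally, adding triangulating edges one at a time while maintaining, at each step, a tree-decomposition of the current graph of width at most $\max\{\tw(G),3\}$. A preliminary reduction handles disconnectedness and cut vertices: treewidth decomposes across blocks ($\tw(G)=\max_i\tw(B_i)$), so each block may be triangulated independently, and two components sharing a face can be joined by a short path embedded in that face and absorbed into a single enlarged bag. This reduces the problem to the $2$-connected case, in which every face is bounded by a cycle.

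Fix an optimal tree-decomposition $(T,(B_x))$ of the $2$-connected $G$ of width $w$. For each non-triangular face $f$ with boundary cycle $C_f=v_1\cdots v_k$ ($k\ge 4$), I would look for a \emph{common-bag chord}: a pair $v_iv_j$ with $j\not\equiv i\pm 1\pmod k$ and $\{v_i,v_j\}\subseteq B_x$ for some $x$. Adding such a chord inside $f$ leaves $(T,(B_x))$ valid for $G+v_iv_j$ (the new edge is already covered by $B_x$), so the width is unchanged, and $f$ splits into two strictly smaller faces. The existence of such a chord when $k\ge 4$ rests on a Helly-type observation for the subtrees $T_{v_i}:=\{x\in V(T):v_i\in B_x\}$: the consecutive-index pairs intersect (because each $v_iv_{i+1}$ is an edge of $G$), and a cyclic chain of connected subtrees of a tree with \emph{only} consecutive intersections is impossible when $k\ge 4$. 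Indeed, tracing a closed walk in $T$ that passes in turn through vertices witnessing the consecutive intersections of $T_{v_1},\dots,T_{v_k}$ must backtrack since $T$ is acyclic, and the backtracked portion forces some subtree $T_{v_i}$ to contain a vertex of a non-consecutive $T_{v_j}$.

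The extra slack of $3$ in the bound accommodates the degenerate regime $\tw(G)\le 2$: any plane triangulation on at least four vertices is $3$-connected and hence has treewidth at least $3$, so no construction can avoid some increase when $\tw(G)\le 2$. There, I would allow each chord addition to enlarge a bag by a single endpoint vertex, keeping the total width at most $3$ throughout; equivalently, one can directly triangulate a partial $2$-tree into a plane $3$-tree via an Apollonian-like stacking construction, which has treewidth exactly $3$.

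The main obstacle is the rigorous formalisation of the Helly-type subtree claim, together with verifying that the non-consecutive pair it produces actually corresponds to a chord embeddable \emph{inside} the intended face $f$ (rather than merely anywhere in $\mathbb{R}^2$). Turning the backtracking intuition into a clean case analysis, and combining it with the low-treewidth base case so that the bound $\max\{\tw(G),3\}$ holds uniformly, is the most technical step; the rest of the proof reduces to standard tree-decomposition manipulations.
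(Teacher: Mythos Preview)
Your skeleton matches the paper's: reduce to the $2$-connected case, then for each non-triangular face $f$ with boundary cycle $C$, use the tree-decomposition to locate a diagonal that can be added inside $f$. The paper also forms exactly the auxiliary graph $H$ on $V(C)$ whose edges are the common-bag pairs, and shows (via the chordal characterisation: a graph is chordal iff it has a tree-decomposition with clique bags) that $H$ must be a full triangulation of $C$. So your Helly-type claim is correct and is actually the easy half; no elaborate backtracking argument is needed.

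The genuine gap is the situation you flag at the end but mis-diagnose: the non-consecutive common-bag pair you produce may already be an edge of $G$, drawn \emph{outside} $f$. You then cannot add it inside $f$ (the graph is simple), and this is not confined to the regime $\tw(G)\le 2$. Take a hexagonal face whose fan diagonals from one vertex are all present as edges outside the face, and attach a large clique elsewhere so that $\tw(G)$ is arbitrarily large; with the natural optimal tree-decomposition, every common-bag chord of the hexagon is already an edge of $G$. Your plan offers no mechanism here, since you reserve bag-enlargement only for the $\tw(G)\le 2$ case. This is precisely where the $3$ in $\max\{\tw(G),3\}$ enters in the paper's proof: if $E(H)\subseteq E(G)$, the paper takes two adjacent triangular faces $abc$, $bcd$ of the outerplanar triangulation $H$; the edge $bc\in E(G)$ (outside $f$) together with the diagonal $bc$ of $H$ (inside $f$) forms a noose of $G$ separating $a$ from $d$, forcing $ad\notin E(G)$. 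One adds $ad$ across $f$ and glues tree-decompositions of the two sides of the noose via a new bag $\{a,b,c,d\}$ of size~$4$. Your proposal needs this step (or an equivalent) to go through.
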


\begin{proof}
If $|V(G)|\leq 4$ then the complete graph $G'$ with vertex set $V(G)$ satisfies the claim, since $\tw(G')=|V(G)|-1\leq 3$. Now assume that $|V(G)|\geq 5$. It suffices to show that if $G$ is not a plane triangulation, then there exists two non-adjacent vertices $v,w\in V(G)$ on the same face of $G$,  such that $\tw(G+vw)\leq\max\{\tw(G),3\}$. (Here $G+vw$ is the graph obtained from $G$ by adding the edge $vw$ between non-adjacent vertices $v,w\in V(G)$.)\ 
Then repeat until we obtain a plane triangulation preserving the embedding of $G$. 
  
First suppose that $G$ is disconnected. Let $G_1,\dots,G_c$ be the components of $G$ where $c\geq 2$. So $\tw(G_i)\leq\tw(G)$. Let $(B_x:x\in V(T_i))$ be a tree-decomposition of $G_i$ with width at most $\tw(G)$. 
There is a vertex $v$ in some component $G_i$ and there is a vertex $w$ in some distinct component $G_j$ such that $v$ and $w$ are on a common face $f$. Embed the edge $vw$ across $f$. Let $T$ be obtained from the disjoint union $T_1\cup\dots\cup T_c$ by adding a new node $z$ with $B_z:=\{v,w\}$, and adding edges $xz$ and $xyz$ where $v\in B_{x}$ and $w\in B_{y}$. Add edges to the underlying forest to create a tree. We obtain a tree-decomposition of $G+vw$ with width $\tw(G)$. Now assume that $G$ is connected. 

Suppose $G$ has a cut-vertex $v$. So $G$ has subgraphs $G_1\subsetneq G$ and $G_2\subsetneq G$ with $G_1\cup G_2=G$ and $V(G_1\cap G_2)=\{v\}$. So $\tw(G_1)\leq\tw(G)$ and $\tw(G_2)\leq \tw(G)$. Since $G$ is connected, there are edges $uv\in E(G_1)$ and $vw\in E(G_2)$ with $u,v,w$ consecutive on some face $f$ of $G$. Since $v$ is a cut-vertex, $uw\not\in E(G)$. Embed the edge $uw$ across $f$. Let $(B_x:x\in V(T_i))$ be a tree-decomposition of $G_i$ with width at most $\tw(G)$, where $T_1$ and $T_2$ are disjoint. Let $x\in V(T_1)$ such that $u,v\in B_x$. Let $y\in V(T_2)$ such that $v,w\in B_y$. Let $T$ be obtained from $T_1\cup T_2$ by adding a new node $z$ adjacent to $x$ and $y$. With $B_z:=\{u,v,w\}$, we obtain a tree-decomposition of $G+uw$ with width at most $\max\{\tw(G),2\}$. Now assume that $G$ is 2-connected. 

Let $(B_x:x\in V(T))$ be a tree-decomposition of $G$ with width $\tw(G)$. Let $f$ be a non-triangular face of $G$. Since $G$ is 2-connected, $f$ is bounded by a cycle $C$. Let $H$ be the graph with $V(H):=V(C)$ where distinct vertices $v,w\in V(H)$ are adjacent if and only if there is a node $x\in V(T)$ such that $v,w\in B_x$. By construction, $(B_x\cap V(H):x\in V(T))$ is a tree-decomposition of $H$. Every edge of $C$ is in $H$. Consider an edge $vw$ of $H$ where $vw\not\in E(C)$. If $vw\not\in E(G)$ then $(B_x:x\in V(T))$ is a tree-decomposition of $G+vw$ with width $\tw(G)$, and we are done. So assume that $E(H)\subseteq E(G)$. Each edge in $E(H)\setminus E(C)$ is embedded outside of $f$. Consider the drawing of $H$ where each edge in $E(H)\setminus E(C)$ is drawn across $f$. If two edges cross in this drawing of $H$, then the corresponding edges cross in $G$ (outside of $f$). So no two edges cross in this drawing of $H$. In fact, we claim that $H$ is a triangulation of $C$ (a maximal outerplanar graph). If not, then there is an internal face of $H$ bounded by an induced cycle $D$ with at least four vertices. By construction, $(B_x\cap V(D):x\in V(T))$ is a tree-decomposition of $D$. Since $|V(D)|\geq 4$, $D$ is not chordal. A graph is chordal if and only if it has  a tree-decomposition in which each bag is a clique~\citep[Proposition~12.3.6]{Diestel5}. So $B_x\cap V(D)$ is not a clique for some $x\in V(T)$. Thus two non-adjacent vertices in $D$ are in $B_x$, which contradicts the definition of $H$. Thus $H$ is a triangulation of $C$. Hence there are vertices $a,b,c,d\in V(H)$ with 
$abc$ and $bcd$ triangular faces of $H$. So $ad\not\in E(G)$, as otherwise it would cross $bc$. The edge $bc$ in $G$ and the edge $bc$ in $H$ form a noose $\lambda$ in $G$ separating $a$ and $d$. Without loss of generality, $a$ is inside $\lambda$ and $d$ is outside $\lambda$. Let $G_1$ be the subgraph of $G$ induced by $b$, $c$ and every vertex inside $\lambda$. Note that $\tw(G_1)\leq\tw(G)$, and $abc$ is a triangle in $G_1$. Let $G_2$ be the subgraph of $G$ induced by $b$, $c$ and every vertex outside $\lambda$. Note that $\tw(G_2)\leq\tw(G)$ and $bcd$ is a triangle in $G_2$. For $i\in\{1,2\}$, let $(B_x:x\in V(T_i))$ be a tree-decomposition of $G_i$ with width at most $\tw(G)$, where $T_1$ and $T_2$ are disjoint. 
Let $x\in V(T_1)$ such that $a,b,c\in B_x$. 
Let $y\in V(T_2)$ such that $b,c,d\in B_y$. 
Let $T$ be obtained from $T_1\cup T_2$ by adding a new node $z$, and let $B_z:=\{a,b,c,d\}$. Observe that $(B_x:x\in V(T))$ is a tree-decomposition of $G+ad$ with width $\max\{\tw(G),3\}$. Embed $ad$ across $f$. 
\end{proof}

Note that \citet[Lemma~1]{BV13} proved \cref{embedding_preserving_triangulation} without the property that $G'$ preserves the embedding of $G$. We need the stronger property for the  proof of the next lemma.


\begin{lem}\label{curve_separator}
For any plane graph $G$ and face-weighting  $w:F(G)\to\mathbb{R}_{\ge 0}$ with total weight $N:=w(F(G))$, there exists a noose $\lambda$ of $G$ such that $|\lambda\cap V(G)|\le \max\{4\tw(G),12\}+4$ and $w(F(G)\cap F(C) )\le \frac23 N$ for each component $C$ of $G-\lambda$.
\end{lem}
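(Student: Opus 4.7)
The plan is to reduce to the plane triangulation setting where \cref{cycle_separator} applies. First, apply \cref{embedding_preserving_triangulation} to extend $G$ to an embedding-preserving plane triangulation $G'$ with $\tw(G')\le\max\{\tw(G),3\}$. Next, transport the face-weighting to $G'$: each face $f\in F(G)$ is partitioned into triangular sub-faces $f^f_1,\dots,f^f_{k_f}\in F(G')$ by the added chords; set $w'(f^f_1):=w(f)$ and $w'(f^f_j):=0$ for $j\ge 2$, so $w'(F(G'))=N$. (The degenerate case $|V(G)|\le 2$ can be handled directly by taking any small noose lying inside one face of $G$.)

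Apply \cref{cycle_separator} to $(G',w')$ to obtain a cycle $\lambda$ in $G'$ of length at most $4\gm(G')+4$ such that $w'(F(G')\cap F(C'))\le\tfrac{2}{3}N$ for every component $C'$ of $G'-V(\lambda)$. Since treewidth is minor-monotone, $\gm(G')\le\tw(G')\le\max\{\tw(G),3\}$, so $|V(\lambda)|\le\max\{4\tw(G),12\}+4$. I now regard $\lambda$ as a simple closed curve in $\mathbb{R}^2$. Because $G'$ preserves the embedding of $G$, every edge of $\lambda$ is either an edge of $G$ (contained entirely in $\lambda$) or an added chord lying in the interior of some face of $G$ (and thus disjoint from the interiors of edges of $G$). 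Hence $\lambda$ is a noose of $G$, and $|\lambda\cap V(G)|=|V(\lambda)|$ meets the required bound.

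For the weight bound, let $C$ be a component of $G-\lambda$; since $G\subseteq G'$, $C$ is contained in a unique component $C'$ of $G'-V(\lambda)$. For each $f\in F(G)\cap F(C)$, the boundary $\partial f$ lies in $C\subseteq C'$, and each sub-face $f^f_j$ is bounded by edges of $\partial f$ together with chords across $f$ whose endpoints lie on $\partial f$; all of these endpoints lie in $V(G)\setminus V(\lambda)\cap C'$, so the chords themselves belong to $C'$, and thus $f^f_j\in F(G')\cap F(C')$. Summing over $f$ and noting that sub-faces from distinct faces of $G$ are disjoint,
\[
w(F(G)\cap F(C))=\sum_{f\in F(G)\cap F(C)} w'(f^f_1)\le w'(F(G')\cap F(C'))\le\tfrac{2}{3}N.
\]
The main point requiring care is the transfer step: one needs the embedding-preserving property of \cref{embedding_preserving_triangulation} so that (i) $\lambda$ interpreted in $\mathbb{R}^2$ is genuinely a noose of $G$, and (ii) each face of $G$ contained in $F(C)$ has all of its $G'$-sub-faces contained in $F(C')$, so that the weight bound survives the passage from $G'$ back to $G$.
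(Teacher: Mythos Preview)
Your proof is correct and follows essentially the same approach as the paper: apply \cref{embedding_preserving_triangulation}, push the face-weighting to the triangulation, invoke \cref{cycle_separator}, and use the embedding-preserving property to interpret the resulting cycle as a noose of $G$ and to pull the weight bound back. The only cosmetic difference is that the paper spreads $w(f)$ evenly over the sub-faces of $f$ rather than concentrating it on one sub-face, and your remark that the degenerate case $|V(G)|\le 2$ be handled by ``any small noose lying inside one face'' should instead take a noose passing through the (at most two) vertices so that $G-\lambda$ has no components.
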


\begin{proof}
By \cref{embedding_preserving_triangulation}, edges can be added to 
$G$ to obtain a plane triangulation $G'$ with $\tw(G')\leq\max\{\tw(G),3\}$. Since $G'$ is embedding-preserving, each face $f\in F(G)$ corresponds to a non-empty set $S_f\subseteq F(G')$, such that $S_{f_1}\cap S_{f_2}=\emptyset$ for distinct $f_1,f_2\in F(G)$. For each $f\in F(G)$ let $w(f'):=w(f)/|S_f|$ for each $f'\in S_f$. So the total weight of faces in $S_f$ equals $w(f)$, and the total weight of faces in $G'$ equals $N$. By \cref{cycle_separator}, $G'$ contains a cycle $\lambda$ of length at most $4\gm(G')+4\leq 4\tw(G')+4 \leq \max\{4\tw(G),12\}+4$ such that $w(F(G')\cap F(C')) \le \frac23 N$ for each component $C'$ of $G'-V(\lambda)$.  Since $G'$ is a supergraph of $G$, any component $C$ of $G-V(\lambda)$ is contained in some component $C'$ of $G-V(\lambda)$. 
Since $G'$ is embedding-preserving, any face in $F(G)\cap F(C)$ is a union of some faces in $F(G')\cap F(C')$, so $w( F(G)\cap F(C) )\le w( F(G')\cap F(C') ) \le \frac23 N$.  The result follows by treating the cycle $\lambda$ in $G'$ as a noose of $G$.
\end{proof}

\begin{lem}
\label{new_apply_curve_separator}
For any real numbers $N\ge W\ge 1$ and $t\ge 12\sqrt{10\log_{3/2} N + 36}+7$, for any plane graph $G$ and any face-weighting $w:F(G)\to\mathbb{R}_{\geq 0}$ such that $w(F(G))  \le N$, and  $w(F(G,v)) \ge W$ for each $v\in V(G)$, there exists $S\subseteq V(G)$ such that $|S|\le 
\frac{4778 N}{W(t-7)}+\frac{20480 N}{W(t-7)^2}$ and $\tw(G-S)< t$.
\end{lem}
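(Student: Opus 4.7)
My plan is to construct $S$ by recursively applying \cref{curve_separator}. Call a \emph{piece} a plane subgraph $P$ of $G$ corresponding to a closed disc region on the sphere, together with a set $X\subseteq F(P)$ of ``boundary faces'' (carrying weight zero) arising from previous noose cuts. For such a piece with total face-weight $m$, \cref{tw_vs_weight} gives $\tw(P)\le 12\sqrt{|X|+m/W}+7$; in particular $\tw(P)<t$ once $|X|+m/W<((t-7)/12)^2$. Starting from the piece $(G,\emptyset)$ with $m\le N$, I repeatedly pick an active piece $P$ with $\tw(P)\ge t$, apply \cref{curve_separator} to it to obtain a noose $\lambda$ with $|\lambda\cap V(P)|\le 4\tw(P)+4\le 48\sqrt{|X|+m/W}+32$, add the noose vertices to $S$, and split $P$ into the two sides of $\lambda$—each side becoming a new piece of face-weight at most $\tfrac23 m$ with one additional boundary face in its $X$-set.

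I would choose a stopping threshold of the form $W\theta$ with $\theta=\Theta((t-7)^2)$, so the recursion terminates at depth $D=O(\log_{3/2}(N/(W\theta)))=O(\log_{3/2} N)$. The hypothesis $t\ge 12\sqrt{10\log_{3/2} N+36}+7$ is precisely what is needed to guarantee $D+\theta<((t-7)/12)^2$, so even the deepest leaf piece (with $|X|\le D$) satisfies $\tw<t$ by \cref{tw_vs_weight}, giving $\tw(G-S)<t$.

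For the size of $S$, sum noose sizes over all internal pieces and apply $\sqrt{a+b}\le\sqrt{a}+\sqrt{b}$:
\[
|S|\le 48\sum_u\sqrt{m(u)/W}+48\sum_u\sqrt{|X_u|}+32|I|,
\]
where $|I|$ is the number of internal pieces. Since each leaf has weight at most $W\theta$ and leaves partition the total weight, $|I|\le N/(W\theta)=O(N/(W(t-7)^2))$, which delivers the second term $\tfrac{20480 N}{W(t-7)^2}$. For the main sum, concavity of $\sqrt{\,\cdot\,}$ implies that $\sum_u\sqrt{m(u)}$ is maximized by a weight-balanced recursion, where at depth $d$ there are $2^d$ pieces of weight $N/2^d$ and the depth is $D=\log_2(N/(W\theta))$; the sum evaluates to $O(\sqrt{N/W}\cdot 2^{D/2})=O(N/(W\sqrt{\theta}))=O(N/(W(t-7)))$, producing $\tfrac{4778 N}{W(t-7)}$ after explicit constant tracking. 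The $\sum_u\sqrt{|X_u|}$ contribution is at most $O(2^D\sqrt{D})$, and the hypothesis on $t$ (via $\sqrt{D}\le(t-7)/\sqrt{1440}$) absorbs it into the main first-order term.

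The main obstacle will be the tight constant analysis of the geometric-like sum $\sum_u\sqrt{m(u)/W}$ in the worst (weight-balanced) recursion, together with a clean concavity argument identifying the balanced tree as the maximizer. The hypothesis on $t$ plays a double role: it bounds the recursion depth $D$ so that boundary accumulation in $X$ does not push $\tw$ above $t$, and it makes $(t-7)$ large compared to the logarithmic depth so that $\sqrt{D}$ is dominated by a constant multiple of $(t-7)/12$, preventing the $|X|$-term from worsening the leading asymptotic.
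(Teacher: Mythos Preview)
Your overall strategy—recursively applying \cref{curve_separator}, tracking accumulated boundary faces $X$, and invoking \cref{tw_vs_weight} for termination—is exactly the paper's. The gap is in your analysis of the main sum $\sum_u\sqrt{m(u)/W}$.

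The claim that ``concavity of $\sqrt{\,\cdot\,}$ implies $\sum_u\sqrt{m(u)}$ is maximised by a weight-balanced recursion'' is false. Concavity controls the sum at a \emph{fixed} depth with a \emph{fixed} number of pieces, but here the tree shape (depth and branching) changes with the weight splits, and more unbalanced splits create more internal nodes. Concretely, take $N=4$ and threshold $W\theta=1$: the balanced tree has internal weights $\{4,2,2\}$ with $\sum\sqrt{m(u)}=2+2\sqrt2\approx4.83$, whereas repeated $\tfrac23$--$\tfrac13$ splits give internal weights $\{4,\tfrac83,\tfrac43,\tfrac{16}{9},\tfrac{32}{27}\}$ with sum $\approx7.21$. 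So the balanced tree is \emph{not} the maximiser, and your depth-based estimate $O(\sqrt{N/W}\cdot 2^{D/2})$ with $D=\log_2(N/(W\theta))$ is not an upper bound. (If instead you use Cauchy--Schwarz at each depth and sum to the true maximum depth $\log_{3/2}(N/(W\theta))$, the bound blows up polynomially.)

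The paper avoids this by grouping pieces not by recursion depth but by the value of $|X_C|+N_C/W$, into geometric levels $\mathcal{C}_i$ where this quantity lies in $[(\tfrac{t-7}{12})^2(\tfrac43)^i,\,(\tfrac{t-7}{12})^2(\tfrac43)^{i+1})$. The hypothesis on $t$ is used (via $N_C/W\ge 36+9|X_C|$) to show this combined quantity drops by a factor at least $\tfrac43$ at every recursive step, so pieces within one level are incomparable in the recursion tree and hence face-disjoint. That yields $|\mathcal{C}_i|\le\tfrac{160N}{W(t-7)^2}(\tfrac34)^i$, and summing the per-level contributions as a geometric series gives the stated bound. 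Your framework can be salvaged by replacing the concavity step with this weight-level disjointness argument.

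A smaller point: your bound $|I|\le N/(W\theta)$ does not follow from ``each leaf has weight at most $W\theta$ and leaves partition the total weight''—an upper bound on leaf weights gives no upper bound on their number. You need the $\tfrac23$ constraint (hence each child has weight $\ge\tfrac13$ of its parent $>W\theta/3$) to get $|I|<3N/(W\theta)$.
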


\begin{proof}
For each connected subgraph $C$ of $G$, let $N_C:=w( F(C)\cap F(G) )$.  

Construct the set $S$ iteratively, where initially $S$ is empty.  At each step, if $\tw(G-S)< t$, we stop.  Otherwise, let $C$ be a component of $G-S$ with $\tw(C)\ge t$ and let $X_C:=F(C)\setminus F(G)$. Note that $w(f)$ is well-defined for each $f\in F(C)\setminus X_C=F(C)\cap F(G)$.  Furthermore, for each vertex $v\in V(C)$, $F(C,v)$ includes a face in $X_C$ or $w( F(C,v) )=w( F(G,v)) \ge W$. Observe that, $C$, $N_C$, and $X_C$ are compatible with \cref{tw_vs_weight}. Thus 
    \begin{equation}
    \label{TWbound}
     t  \leq \tw(C) \leq 12\sqrt{|X_C|+N_C/W} +7 .
    \end{equation}

Let $w(f):=0$ for each $f\in X_C$. Apply \cref{curve_separator} to $C$. We obtain a noose $\lambda_C$ of $C$ such that if $S_C:=\lambda_C\cap V(C)$ then 
    \begin{align}
    |S_C| 
     \leq \max\{4\tw(C),12\}+4
    & \leq \max\{48\sqrt{|X_C|+N_C/W}+28,12\}+4 \nonumber\\
    & = 48\sqrt{|X_C|+N_C/W}+32.\label{SizeSC}
    \end{align}
    and for every component $C'$ of $C-S_C$, the set of faces of $C'$ that are faces of $C$ has total weight at most $\frac23 N_C$. Every face in $F(C')\cap F(G)$ is also a face in $F(C)\cap F(G)$. So $N_{C'}\leq \frac23 N_C$. Add $S_C$ to the set $S$. 

    By construction, $\tw(G-S)< t$. Let $\mathcal{C}$ be the collection of components $C$ considered during the algorithm (for which $S_C$ is added to $S$). Consider $C\in\mathcal{C}$. Let $C'$ be a component of $C-S_C$. So $N_{C'}\le \frac23 N_C$. This implies the algorithm terminates. Furthermore, $X_{C'}\setminus X_C$ has at most one face, which contains $\lambda_C$. Therefore $|X_{C'}|\leq |X_{C}|+1$. By induction, $N_C \leq (\frac23)^{|X_C|} N$. Thus $|X_C| \leq \log_{3/2}(N/N_C)\leq \log_{3/2} N$. 

By assumption, $t-7 \geq 12\sqrt{36+10\log_{3/2} N} \geq 12\sqrt{36+10 |X_C|} $ for each $C\in\mathcal{C}$. By \cref{TWbound}, 
\begin{equation}
\label{some-equation}
\tfrac{N_C}{W} \geq (\tfrac{t-7}{12})^2 -|X_C| \geq 36+ 9|X_C|.
\end{equation}
For each integer $i\geq 0$, let 
\begin{equation*}
\mathcal{C}_i:=\{ C\in\mathcal{C}: (\tfrac{t-7}{12})^2 (\tfrac43)^i \leq |X_C| + \tfrac{N_C}{W} < (\tfrac{t-7}{12})^2 (\tfrac43)^{i+1} \}.
\end{equation*}
Thus, for each $C\in\mathcal{C}_i$, 
\begin{equation}
\label{range}
(t-7) (\tfrac43)^{i/2} \leq 
12 \sqrt{|X_C| + \tfrac{N_C}{W}} < 
(t-7) (\tfrac43)^{(i+1)/2}.
\end{equation}
By definition, $\mathcal{C}_0,\mathcal{C}_1,\dots$ are pairwise-disjoint. 
By \cref{TWbound}, each $C\in\mathcal{C}$ is in $\mathcal{C}_i$ for some $i\geq 0$. So $\{\mathcal{C}_0,\mathcal{C}_1,\dots\}$ is a partition of $\mathcal{C}$.  

We now analyse $|\mathcal{C}_i|$ for each integer $i\geq 0$. Consider $C\in\mathcal{C}_i$. 
By \cref{some-equation}, $\tfrac{N_C}{W} \geq 36+ 9|X_C|> 12+3|X_C|$, which can be rewritten as  $\tfrac{N_C}{W} +|X_C| >  \tfrac43(1+|X_C| + \tfrac23 \tfrac{N_C}{W})$. Thus, if $C'$ is a component of $C-S_C$, then
$$\tfrac{N_C}{W} +|X_C| >     \tfrac43(1+|X_C| + \tfrac23 \tfrac{N_C}{W})
     \geq  \tfrac43 (|X_{C'}| + \tfrac{N_{C'}}{W}).$$
Hence, if $C'\in\mathcal{C}_j$, then $j\geq i+1$. 
More generally, if $C'\in\mathcal{C}_j$ is any proper subgraph of $C$, then $j\geq i+1$. Thus $F(C)\cap F(D)\cap F(G)=\emptyset$ for distinct $C,D\in\mathcal{C}_i$. Hence, 
\begin{equation*}
\sum_{C\in \mathcal{C}_i} N_C
= 
\sum_{C\in \mathcal{C}_i} w(F(C)\cap F(G))
\le w(F(G))
\le N.
\end{equation*}
Note that \cref{some-equation} implies $\tfrac{N_C}{W} > \tfrac{9}{10} ( \tfrac{N_C}{W}+|X_C|)$. Hence, 
\begin{equation*}
    \sum_{C\in \mathcal{C}_i} \tfrac{9}{10} (\tfrac{t-7}{12})^2 (\tfrac43)^i
\leq 
\sum_{C\in \mathcal{C}_i} \tfrac{9}{10} ( \tfrac{N_C}{W} +|X_C| )
\leq 
\sum_{C\in \mathcal{C}_i} \tfrac{N_C}{W}
\le  \tfrac{N}{W} .
\end{equation*}
Therefore,
\begin{equation*}
|\mathcal{C}_i| 
\le \tfrac{160 N}{W (t-7)^2}  (\tfrac34)^i  .
\end{equation*}

We now analyse $|S|$. For each integer $i\geq 0$, by \eqref{SizeSC} and \eqref{range},
  \begin{align*}
      \sum_{C\in\mathcal{C}_i} |S_C| 
    \leq \sum_{C\in\mathcal{C}_i} 48 \sqrt{ |X_C| + N_C/W}+32
&     <|\mathcal{C}_i| \, \big(  4(t-7) (\tfrac43)^{(i+1)/2} +32\big)\\
 &    \leq \tfrac{160 N}{W (t-7)^2} (\tfrac34)^i 
 \big(  4(t-7) (\tfrac43)^{(i+1)/2} +32\big)\\
  &   \leq \tfrac{640 \sqrt{4/3}\, N}{W (t-7)} (\tfrac34)^{i/2} 
  + \tfrac{5120 N}{W (t-7)^2 } (\tfrac34)^i   .
    \end{align*}  
 Therefore
  \begin{align*}
    |S| =  
    \sum_{i\geq 0} \sum_{C\in\mathcal{C}_i} |S_C| 
  &  \leq
    \sum_{i\geq 0} \left( \tfrac{640 \sqrt{4/3}\, N}{W (t-7) }(\tfrac34)^{i/2}  
  + \tfrac{5120 N}{W (t-7)^2 } (\tfrac34)^i \right)\\
&    \leq\tfrac{640  \times 2(2+\sqrt{3}) N}{W (t-7)}
     + \tfrac{5120  \times 4 N}{W (t-7)^2}\\
 &   <\tfrac{4778 N}{W(t-7)}
    +\tfrac{20480 N}{W(t-7)^2}    .
    \qedhere
\end{align*}  
\end{proof}

We now complete the proof of our main result; refer to \cref{overview}.

\begin{thm}
\label{3ColourPlanarPrecise}
For sufficiently large $n$ every $n$-vertex planar graph $G$ is 3-colourable with clustering $16n^{4/9}$.
\end{thm}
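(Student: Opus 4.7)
The plan is to produce a 3-colouring with clustering $q:=16n^{4/9}$ via two nested sets $S_2\subseteq S_1\subseteq V(G)$, assigning colour $1$ to $V(G)\setminus S_1$, colour $2$ to $S_1\setminus S_2$, and colour $3$ to $S_2$. This achieves clustering at most $q$ whenever (a) $S_1$ is a $q$-separator of $G$, (b) $S_2$ is a $q$-separator of $G[S_1]$, and (c) $|S_2|\le q$. I would take $S_1$ to be a $q$-separator of $G$ of minimum size, so that $|S_1|\le 12n/q^{1/2}$ by \cref{LT}. The key strengthening I will exploit is a local consequence of minimality: for every $v\in S_1$, the components of $G-S_1$ adjacent to $v$ have at least $q$ vertices in total, else $S_1\setminus\{v\}$ would be a smaller $q$-separator.

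Now view $G[S_1]$ as a plane graph with the embedding inherited from $G$ and define $w\colon F(G[S_1])\to\mathbb{R}_{\ge 0}$ by letting $w(f)$ be the number of vertices of $V(G)\setminus S_1$ embedded in the interior of $f$. Since each component of $G-S_1$ lies in a single face of $G[S_1]$ and is adjacent only to boundary vertices of that face, the minimality above forces $w(F(G[S_1],v))\ge q$ for every $v\in S_1$. Taking $N:=w(F(G[S_1]))\le n$, $W:=q$, and $t=\Theta(q^{1/4})=\Theta(n^{1/9})$ (which for $n$ large enough meets the admissibility threshold $t\ge 12\sqrt{10\log_{3/2}N+36}+7$), \cref{new_apply_curve_separator} applied to $G[S_1]$ yields $R\subseteq S_1$ with $|R|=O(n/(qt))$ and $\tw(G[S_1]-R)<t$.

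To complete $S_2$, apply \cref{TreewidthSep} to $G[S_1]-R$ (at most $|S_1|$ vertices, treewidth $<t$) with separation parameter $q$: this produces a $q$-separator $R'$ of $G[S_1]-R$ with $|R'|\le |S_1|t/q\le 12nt/q^{3/2}$. Set $S_2:=R\cup R'$, which is a $q$-separator of $G[S_1]$. Then
\[
|S_2|\le \tfrac{4778\,n}{q(t-7)}+\tfrac{20480\,n}{q(t-7)^2}+\tfrac{12\,nt}{q^{3/2}},
\]
whose minimum in $t$ is attained at $t-7\asymp q^{1/4}$ and equals approximately $2\sqrt{4778\cdot 12}\,n/q^{5/4}\approx 479\,n/q^{5/4}$. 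Since $16^{9/4}=512>479$, this is at most $q=16n^{4/9}$ for $n$ sufficiently large, establishing (c). (The edge case $|S_1|<q$ is trivial with $S_2=\emptyset$.)

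The main obstacle is the face-weighting step: lifting $W$ from the trivial value $3$ (minimum face-incidence at a triangulation vertex) up to $q$. Without using minimality of $S_1$, \cref{new_apply_curve_separator} would only give $|R|=O(|S_1|/t)=O(n/(q^{1/2}t))$, and the same balance above would only recover the classical exponent $1/2$. Driving $W$ up to $q$ via minimality, while $N$ stays bounded by $n$, is precisely the mechanism that brings the exponent down from $1/2$ to $4/9$. Everything else is careful bookkeeping of the explicit constants $12$, $4778$, and $20480$ from the earlier lemmas, together with the mild hypothesis that $n$ is sufficiently large.
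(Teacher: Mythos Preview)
Your proposal is correct and follows essentially the same route as the paper: take a minimal $q$-separator, use minimality to force face-weight around each vertex up to roughly $q$ (strictly one obtains $W\ge q-1$ rather than $W\ge q$, as the paper records, but this is absorbed by ``sufficiently large $n$''), apply \cref{new_apply_curve_separator} with $t\approx 40n^{1/9}$, and then \cref{TreewidthSep}. The paper makes exactly the choice $t=40n^{1/9}$ you identify as optimal and verifies the same numerical balance (their $|S_1|+|S_2|\le 8n^{4/9}+8n^{4/9}$ is your $479\,n/q^{5/4}\le q$ via $16^{9/4}=512$).
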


\begin{proof}
Fix a plane embedding of $G$. By \cref{LT}, there is a $(16n^{4/9})$-separator $S_0$ of $G$ such that $|S_0| \leq \frac{12n}{(16 n^{4/9})^{1/2}} = 3 n^{7/9}$. Let $S\subseteq S_0$ be a vertex-minimal $(16n^{4/9})$-separator. So $|S| \leq 3n^{7/9}$. Colour every vertex in $G-S$ red. No other vertices will be coloured red. So every red component has at most $16 n^{4/9}$ vertices. 

Let $G_1$ be the plane subgraph of $G$ induced by $S$. For each face $f$ of $G_1$, let $w(f)$ be the number of vertices of $G-S$ embedded in the interior of $f$. So $w(F(G_1)) = |V(G-S)|\leq n$. 
For each vertex $v$ of $S$, by the minimality of $S$, there is a connected subgraph $C$ of $G-(S\setminus\{v\})$ with more than $16 n^{4/9}$ vertices. Each vertex in $C-v$ is embedded in the interior of a single face of $G_1$ incident to $v$. Thus 
$w(F(G_1,v)) \geq 16 n^{4/9}-1$. Hence \cref{new_apply_curve_separator} is applicable to $G_1$ with $N=n$ and $W=16 n^{4/9}-1$, where $t=40 n^{1/9} \geq 12\sqrt{10 \log_{3/2} N + 36}+7$. 
Thus there exists $S_1\subseteq V(G_1)$ such that $\tw(G_1-S_1)\leq t$ and 
\begin{align*}
    |S_1|
\leq \tfrac{4778n}{( 16 n^{4/9} -1) (40 n^{1/9}-7)} + 
\tfrac{20480 n }{ ( 16 n^{4/9} -1) (40 n^{1/9}-7)^2}
< 8 n^{4/9}.
\end{align*}
Colour every vertex in $S_1$ yellow. 

Let $G_2:= G_1-S_1$. \cref{TreewidthSep} is applicable to $G_2$ with $k:=\ceil{t-1}$ and $p=8n^{4/9}$ and $q=16n^{4/9}$, since $|V(G_2)|(k+1) \leq |S|\,t \leq (3 n^{7/9}) (40 n^{1/9}) < (8n^{4/9}) (16 n^{4/9})$ and $p=8n^{4/9} \geq 40n^{1/9}+1 \geq k+1$. We obtain a $(16n^{4/9})$-separator $S_2$ of $G_2$ of size at most $8 n^{4/9}$. Colour every vertex in $S_2$ yellow. The number of yellow vertices is at most $2\cdot 8n^{4/9}\leq 16n^{4/9}$. Colour every vertex in $G_2-S_2$ blue. Each blue component has at most $16n^{4/9}$ vertices. 

Therefore, $G$ is 3-coloured with clustering at most $16n^{4/9}$.
\end{proof}

\begin{figure}[!h]
\centering
\includegraphics{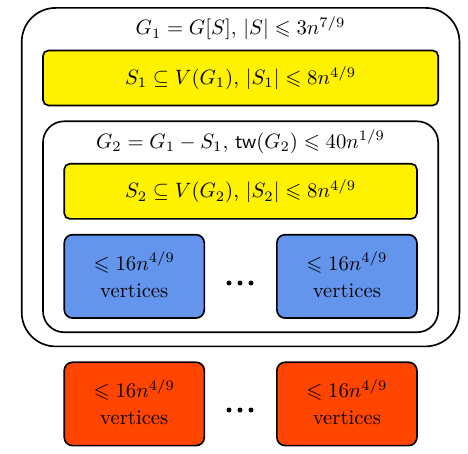}
\caption{Structure of the main proof.}
\label{overview}
\end{figure}

\subsection*{Acknowledgements}

This research was initiated at the  Barbados Graph Theory Workshop held at the Bellairs Research Institute in March 2025. Thanks to the other workshop participants for creating a productive working atmosphere. 

{\fontsize{10pt}{11pt}
\selectfont
\bibliographystyle{DavidNatbibStyle}
\bibliography{DavidBibliography}}

\end{document}